\documentclass{amsart}

\usepackage[T1]{fontenc}
\usepackage{lmodern}
\usepackage{tikz-cd}
\usepackage[hyphens]{url}
\usepackage{amsthm,amsmath,amssymb,amsfonts,dsfont}
\usepackage{mathtools}
\usepackage[margin=1.5in]{geometry}
\usepackage{xcolor}
\usepackage{microtype}
\usepackage[colorlinks,linkcolor=blue,citecolor=blue,urlcolor=blue]{hyperref}
\usepackage[indentafter]{titlesec}
\usepackage{mathrsfs}
\usepackage{bm}

\titleformat{name=\section}{}{\thetitle.}{0.8em}{\centering\scshape}
\titleformat{\subsection}[runin]
  {\normalfont\bfseries}{\thesubsection}{1em}{}
\numberwithin{equation}{section}
\makeatletter
\def\l@section{\@tocline{1}{0pt}{1pc}{1.5pc}{}}
\makeatother

\newtheorem{definition}{\textbf{Definition}}[section]

\newtheorem{corollary}[definition]{\textbf{Corollary}}
\newtheorem{lemma}[definition]{\textbf{Lemma}}
\newtheorem{proposition}[definition]{\textbf{Proposition}}

\newtheorem{remark}[definition]{\textbf{Remark}}
\newtheorem{letterthm}{Theorem}

\newtheorem*{conjecture T}{\textbf{Connes' rigidity conjecture for ICC
property (T) groups}}
\newtheorem*{conjecture HRL}{\textbf{Connes' rigidity conjecture for higher rank lattices}}

\DeclareMathOperator{\SL}{SL}
\DeclareMathOperator{\Sp}{Sp}
\DeclareMathOperator{\GL}{GL}
\DeclareMathOperator{\EL}{EL}
\DeclareMathOperator{\Hom}{Hom}

\DeclareMathOperator{\Ann}{Ann}
\newcommand{\Z}{\mathbb Z}
\newcommand{\VN}{ {L}}
\newcommand{\calR}{\mathcal R}

\newcommand{\norm}[1]{\lVert #1\rVert}
\newcommand{\ip}[2]{\langle #1,#2\rangle}

\def\car{\curvearrowright}

\def\CP{\mathcal{P}}

\def\d{\mathrm{d}}

\title[ICC Property~(T) Groups without W$^*$-Superrigidity]
{ICC Property~(T) Groups without W$^*$-Superrigidity}
\author{Shuoxing Zhou}
\address{\'Ecole Normale Sup\'erieure\\
D\'epartement de math\'ematiques et applications\\
45 rue d'Ulm\\
75230 Paris Cedex 05\\
FRANCE}
\email{shuoxing.zhou@ens.psl.eu}

\begin{document}

\begin{abstract}
We construct two explicit countable discrete groups $\Gamma_1$ and $\Gamma_2$ that both have infinite conjugacy classes (ICC) and Kazhdan's property~(T). Although $\Gamma_1$ and $\Gamma_2$ are not isomorphic as groups, their group von Neumann algebras are isomorphic: $\VN(\Gamma_1)\cong\VN(\Gamma_2)$. This provides a counterexample to Connes' rigidity conjecture for ICC property~(T) groups. This result was obtained with assistance from GPT-5.6 Sol, independently of and concurrently with work by OpenAI \cite{OpenAI26}.
\end{abstract}

\maketitle

\section{Introduction}

In 1980, Alain Connes proved that if $\Gamma$ is a countable ICC (infinite conjugacy classes) group with property~(T), then both the outer automorphism group $\operatorname{Out}(L(\Gamma))$ and the Murray--von Neumann fundamental group $\mathcal F(L(\Gamma))$ are countable \cite{Con80}. This was one of the first manifestations of property~(T) as a rigidity phenomenon for von Neumann algebras and suggested that $L(\Gamma)$ might retain substantial information about $\Gamma$.

At that time, the main known examples of infinite discrete groups with property~(T) came from lattices in semisimple Lie groups, with higher rank lattices forming the central motivating class. For higher rank lattices, the strong rigidity theorems of Mostow and Margulis \cite{Mostow73,Mar91} show that if $\Gamma_i<G_i$, $i=1,2$, are irreducible higher rank lattices in connected semisimple real Lie groups with trivial center and no nontrivial compact factors, then an isomorphism $\Gamma_1\cong\Gamma_2$ forces an isomorphism $G_1\cong G_2$. Thus higher rank lattices provided a natural model for the idea that a sufficiently rigid analytic invariant might retain the underlying group.

Against this background, Connes proposed the following bold conjecture \cite{Co82}.

\begin{conjecture T}
Every countable ICC property~(T) group $\Gamma$ is \textbf{W$^*$-superrigid}: if $\Lambda$ is a countable discrete group such that $L(\Lambda)\cong L(\Gamma)$, then $\Lambda\cong\Gamma$ as groups.
\end{conjecture T}

An early rigidity result illustrating the information retained by group factors was obtained by Cowling--Haagerup \cite{CH89}. They proved that if $\Gamma$ is a lattice in $\operatorname{Sp}(n,1)/\{\pm1\}$, with $n\geq2$, then $n$ is determined by the group factor $L(\Gamma)$. Beginning in the early 2000s, Popa's deformation/rigidity theory produced decisive advances in the classification of group and group measure space von Neumann algebras \cite{Po06a,Po06b}. This program also yielded W$^*$-superrigidity results for group actions, notably Ioana's theorem for Bernoulli actions of property~(T) groups \cite{Io11}. At the level of group factors themselves, the first W$^*$-superrigid groups were constructed by Ioana--Popa--Vaes \cite{IPV10}. Later, Chifan--Ioana--Osin--Sun constructed the first W$^*$-superrigid groups with property~(T) \cite{CIOS21}. Subsequent work developed this circle of ideas in several directions, including rigidity of outer automorphisms and embeddings, cocycle-twisted and virtual W$^*$-superrigidity, and W$^*$-superrigidity for groups with infinite center \cite{CIOSII,DV25a,CIOSIII,CFT24,DV25b,CFOT26}.

In the decades after Connes formulated the conjecture, the class of known property~(T) groups expanded far beyond higher rank lattices, and the original conjecture came to be regarded as too broad in its full generality. At the same time, positive W$^*$-rigidity results typically relied on substantial additional algebraic, geometric, or deformation-theoretic structure. It therefore became natural to distinguish the original conjecture, formulated for arbitrary ICC property~(T) groups, from the more geometric rigidity problem for higher rank lattices. This motivates the following more focused version of Connes' conjecture, formulated by Houdayer \cite{HouICM}.

\begin{conjecture HRL}
For $i=1,2$, let $G_i$ be a connected simple real Lie group with trivial center and $\operatorname{rk}_{\mathbb R}(G_i)\geq2$, and let $\Gamma_i<G_i$ be a lattice. If $L(\Gamma_1)\cong L(\Gamma_2)$, then $G_1\cong G_2$ as Lie groups. 
\end{conjecture HRL}

This version is weaker than the restriction of Connes' original conjecture to higher rank lattices, but it retains the geometric rigidity suggested by the theorems of Mostow and Margulis. The original conjecture should therefore be viewed not merely as a prediction awaiting a yes-or-no answer, but as a guiding problem that led to increasingly precise forms of W$^*$-rigidity.

The present work starts from the possibility that property~(T), once separated from the geometric structure of higher rank lattices, may be insufficient by itself to enforce W$^*$-superrigidity. Our main result confirms this possibility by disproving Connes' original conjecture.
\begin{letterthm}\label{thm:main}
There exist explicitly defined countable discrete groups $\Gamma_1$ and $\Gamma_2$ such that:
\begin{enumerate}
  \item both $\Gamma_1$ and $\Gamma_2$ have Kazhdan's property~(T);
  \item both groups are ICC;
  \item $\Gamma_1\not\cong\Gamma_2$;
  \item there is a $*$-isomorphism
$$
    \VN(\Gamma_1)\cong\VN(\Gamma_2)
$$
  between their group von Neumann algebras.
\end{enumerate}

\end{letterthm}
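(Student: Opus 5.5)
The plan is to build the two groups from a single ICC property~(T) group $G$ and an abelian group, in such a way that the group von Neumann algebras can be computed in the same manner for both. The basic tool is the classical fact that for a semidirect product $A\rtimes H$ with $A$ abelian, one has $L(A\rtimes H)\cong L^\infty(\widehat A)\rtimes H$. Consequently, two non-isomorphic groups $A_1\rtimes H$ and $A_2\rtimes H$ have isomorphic group von Neumann algebras as soon as the dual actions $H\curvearrowright \widehat{A_1}$ and $H\curvearrowright\widehat{A_2}$ are conjugate measure-preservingly, say via a measure-space isomorphism. The gap to exploit is that $\widehat A$ with Haar measure forgets the group structure of $A$. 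Property~(T), however, forces $A$ to be finite or the action to be rigid, so a direct abelian normal subgroup $A$ is not an option: a property~(T) semidirect product $A\rtimes H$ with $A$ infinite abelian needs $H$ acting with relative property~(T) (as in $\mathbb Z^n\rtimes \SL_n(\mathbb Z)$), and then the dual action is typically very rigid. I would therefore use \emph{finite} abelian groups, or more precisely central extensions and cocycle twists.

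Concretely, I would take $H$ a property~(T) ICC group, for instance a lattice such as $\SL_3(\mathbb Z)$ or a suitable finite-index subgroup. I would then take two finite abelian groups $A_1\not\cong A_2$ of the same order, say $A_1=\mathbb Z/p^2$ and $A_2=(\mathbb Z/p)^2$. Next, form infinite direct sums indexed by a countable $H$-set $X=H/K$, namely $\Gamma_i=A_i^{(X)}\rtimes H$. These are generalized wreath products. Wreath products with infinite base are not property~(T), though (the base is an infinite abelian normal subgroup without relative (T)). So this has to be modified: instead of wreath products I would use finite quotients sitting inside a property~(T) group.

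The cleanest version is to use the known fact that $L(\Gamma)$ only depends on $\Gamma$ through its "twisted" data when $\Gamma$ has a finite central subgroup $C$. Here $L(\Gamma)\cong\bigoplus_{\chi\in\widehat C}L_\chi(\Gamma/C)$, where the summands are twisted group algebras of $\Gamma/C$ for the 2-cocycles obtained by pushing the extension class through $\chi$. Take $Q$ an ICC property~(T) group with $H^2(Q,\mathbb T)$ containing suitable torsion; $\SL_n(\mathbb Z)$ or $\Sp_{2n}(\mathbb Z)$ have Schur multipliers like $\mathbb Z/2$. Then build two central extensions $1\to C\to\Gamma_i\to Q$ with $C=\mathbb Z/4$ versus $\mathbb Z/2\times\mathbb Z/2$, or with differing extension classes, arranged so that the multisets of characters-pushed cocycle classes agree. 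These groups are not ICC, however, because the center is finite. To restore ICC, I would replace the central finite subgroup by an infinite product over cosets of a subgroup, or combine it with a diagonal trick: take $\Gamma_i = (\Gamma_i'\times \Gamma_i'\times\cdots)$ restricted, or use an amalgam.

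The main obstacle is reconciling three requirements: ICC, property~(T), and a flexible decomposition of $L(\Gamma)$. What I would try first is as follows. Let $\Gamma_i$ be the quotient of $\widetilde Q\times \widetilde Q$ (with $\widetilde Q$ a central extension of $Q$ by $C$) by a central subgroup $D_i\le C\times C$. The resulting groups have property~(T), since it passes to quotients and finite products, and have finite center $(C\times C)/D_i$. I would then pass to the ICC property by choosing the $D_i$ so that the center is killed up to an isomorphism of the resulting twisted group algebras. The key verification step is to compute $L(\Gamma_i)$ via the twisted decomposition and to match the factors $L_{\omega}(Q)\bar\otimes L_{\omega'}(Q)$ for different pairs of cocycles, $(\omega,\omega')$ versus $(\omega^{-1},\omega')$ for instance, using complex conjugation or the inverse-cocycle anti-isomorphism combined with a flip. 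Finally, I would prove non-isomorphism of the groups by a group-theoretic invariant, such as the isomorphism type of the image of the Schur class in each factor's abelianized structure. The hardest part is arranging ICC with a trivial center while keeping the twists visible. If this fails, I would fall back on rigid $H$-actions where the cocycle lives in $H^2(Q,\mathbb T)$ of infinite order.
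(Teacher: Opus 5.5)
Your proposal never settles on a construction, and the routes you actually pursue cannot work. The central-extension strategy conflicts with ICC at the most basic level. A nontrivial central element has a one-element conjugacy class, and more generally an ICC group has no nontrivial finite normal subgroup. So for an ICC group $\VN(\Gamma)$ is a factor, and the decomposition $\bigoplus_{\chi}L_\chi(\Gamma/C)$ over a finite central subgroup $C$ collapses to a single summand with $C$ trivial; there are no twists left to match. In the $(\widetilde Q\times\widetilde Q)/D_i$ construction, making the center trivial forces $D_i=C\times C$, and then both groups are $Q\times Q$. The repairs you list also fail. An infinite restricted product of nontrivial groups is not finitely generated, hence lacks (T). A nontrivial amalgamated free product acts on its Bass--Serre tree without a fixed point, hence lacks (T). The final fallback is not a construction.

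The missing idea is the one in your first paragraph, which you discarded on an incorrect heuristic. Relative property (T) for $(D\rtimes H,D)$ is a statement about unitary representations. It does not force a measure-preserving conjugacy of the dual actions on $\widehat D$ to be a group automorphism.

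The paper keeps both $D$ and $H$ fixed. Here $D=(A\otimes V^*)\oplus(A\otimes A)^{\mathrm{Flip}}$ with $A=k[t]^3$ and $k=\Z/2\Z$ is an infinite elementary abelian $2$-group, and $H=\SL_3(k[t])\times\Sp_4(k)$. The action $\theta_2$ differs from $\theta_1$ only by the term $l\,\delta(c)\otimes\ell_q$, where $\ell_q=q\cdot r_0-r_0$ is the $1$-cocycle coming from a quadratic refinement $r_0$ of the symplectic form. The dual actions are conjugated by the fiber shear $F(z,y)=(z,y+\calR(z))$. It preserves Haar measure because it is a translation on each fiber, but $\calR$ is quadratic, so $F$ is not additive.

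Since $\theta_1$ and $\theta_2$ agree on $\SL_3(k[t])$, property (T) for both groups is a single verification. It combines Shalom's theorem for $\EL_3(k[t])=\SL_3(k[t])$ with relative (T) for $(D\rtimes\SL_3(k[t]),D)$. The latter is proved via a weight bound for Boolean quadratic polynomials applied to $\SL_3(k[t])$-invariant measures on $\widehat D$, and the finite factor $\Sp_4(k)$ costs nothing.

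Nonisomorphism is detected by module-theoretic data that the measure-space picture forgets. $D$ is characteristic in both groups, and $D_1$ is a semisimple $\Sp_4(k)$-module. By contrast, $D_2$ contains the nonsplit extension $0\to V^*\to E_\ell\to k\to0$, because $\ell$ is not a coboundary.
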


\subsection*{Independent and concurrent work.}

On August 1, 2026, OpenAI publicly announced a disproof of Connes' rigidity conjecture produced by an internal version of Astra, its next major model \cite[Chapter~4]{OpenAI26}. The present construction was developed with the assistance of GPT-5.6 Sol, and a preliminary proof manuscript had been assembled by July 27, 2026. The two works were developed independently and concurrently, as is also recorded in \cite[Chapter~4, p.~110, Acknowledgments]{OpenAI26}. The OpenAI binary-carry construction produces a countably infinite family of pairwise nonisomorphic, mutually commensurable ICC property~(T) groups with isomorphic group von Neumann algebras.

Both constructions exploit the same general principle. Let $H$ be a countable discrete group, let $D_i$ be a countable discrete abelian group for $i=1,2$, and let $\theta_i:H\car D_i$ be an action by automorphisms for $i=1,2$. Denote by $\alpha_i:H\car(\widehat D_i,\mu_{\widehat D_i})$ the induced action on the Pontryagin dual equipped with its Haar probability measure. By the Fourier transform, we have
$$
L(D_i\rtimes_{\theta_i}H)
\cong
L^\infty(\widehat D_i)\rtimes_{\alpha_i}H.
$$
Consequently, the semidirect products $D_i\rtimes_{\theta_i}H$ may have isomorphic group von Neumann algebras whenever the actions $\alpha_i:H\car(\widehat D_i,\mu_{\widehat D_i})$ are conjugate as nonsingular probability-space actions, even if the conjugating map does not preserve the compact abelian group structures. This is the common mechanism underlying both counterexamples.

In the OpenAI construction, the acting group $H$ is kept fixed while the discrete abelian kernels $D_i$ are changed. The basic pair is distinguished by torsion: $D_1$ has exponent two, whereas $D_2$ contains elements of order four. By taking $H$ to be torsion-free, we ensure that every finite-order element of $D_i\rtimes_{\theta_i}H$ belongs to $D_i$. Hence $D_1\rtimes_{\theta_1}H$ has no elements of order four, whereas $D_2\rtimes_{\theta_2}H$ does, so the two groups are not isomorphic. Shifted versions of the binary-carry construction, together with an additional finite-orbit invariant, distinguish the resulting countably infinite family.

In the present construction, by contrast, both the quotient group $H$ and the elementary abelian normal subgroup $D$ are kept fixed. We change only the two actions $\theta_i:H\car D$ defining the semidirect products $\Gamma_i=D\rtimes_{\theta_i}H$. The induced actions on $\widehat D$ are conjugate as nonsingular $H$-actions through a quadratic fiber shear, but the two $H$-module structures on $D$ remain different. The nonisomorphism of $\Gamma_1$ and $\Gamma_2$ is detected by a semisimple--nonsemisimple distinction between these two $H$-module structures on $D$.

\subsection*{AI use statement.}

The construction underlying the main result of this paper was found mainly by GPT-5.6 Sol, Codex, and the Danus multi-agent research system \cite{danus}, under the author's mathematical guidance. Lean 4.32.1 was used to formally check selected parts of the argument. The resulting material was subsequently reorganized, revised, and checked by the author. All mathematical statements, proofs, references, and final editorial decisions remain the sole responsibility of the author.

\subsection*{Acknowledgements.}

The author would like to thank Professors Cyril Houdayer, Yi-Jun~Yao and Yehuda Shalom for many valuable comments on this paper. The author also thanks Soham Chakraborty for carefully reading the manuscript.

\setcounter{tocdepth}{1}
\tableofcontents

\section{Construction of the two groups}

Let $k=\Z/2\Z$ be the field with two elements. Throughout, all tensor products are taken over $k$. Set
$$
 R=k[t],
 \qquad
 A=R^3.
$$
Let $\mathrm{Flip}:A\otimes A\to A\otimes A: a\otimes b\mapsto b\otimes a$ be the flip map and define
$$
  C=(A\otimes A)^\mathrm{Flip}.
$$
Define the diagonal map
$$
 \Delta:A\longrightarrow C,
 \qquad
 \Delta(a)=a\otimes a.
$$
The group $\SL_3(R)$ acts diagonally on $A\otimes A$ and hence on $C$.

\begin{lemma}\label{lem:delta}
There is a surjective $\SL_3(R)$-equivariant linear map
$$
  \delta:C\longrightarrow A
$$
such that
$$
  \delta(\Delta(a))=a\qquad(a\in A).
$$
Moreover, the square tensors $\Delta(a)$ span $C$.
\end{lemma}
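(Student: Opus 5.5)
Over $k=\mathbb{F}_2$ the symmetric tensors $C=(A\otimes A)^{\mathrm{Flip}}$ have a standard basis, and the map $\delta$ is a "square root" of the diagonal. Since $A=R^3$ is free over $R$, every element of $A\otimes A$ is a finite $k$-linear combination of pure tensors of basis elements $x_i=t^m e_j$, where $\{x_i\}$ is a $k$-basis of $A$. A $k$-basis of $C$ is then given by the elements $x_i\otimes x_i$ together with $x_i\otimes x_j+x_j\otimes x_i$ for $i<j$.

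For the spanning statement, I note that $\Delta(x_i+x_j)-\Delta(x_i)-\Delta(x_j)=x_i\otimes x_j+x_j\otimes x_i$. Hence every basis element of $C$ is a combination of square tensors, so the $\Delta(a)$ span $C$.

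For $\delta$, I would \emph{define} it on this basis by
$$\delta(x_i\otimes x_i)=x_i,\qquad \delta(x_i\otimes x_j+x_j\otimes x_i)=0 .$$
Then I check $\delta(\Delta(a))=a$. Writing $a=\sum_i c_ix_i$ with $c_i\in k$, we have
$$\Delta(a)=\sum_i c_i^2\,x_i\otimes x_i+\sum_{i<j}c_ic_j\,(x_i\otimes x_j+x_j\otimes x_i).$$
Since $c_i^2=c_i$ in $\mathbb{F}_2$, this gives $\delta(\Delta(a))=a$. Surjectivity follows immediately.

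The remaining point, and the main one, is equivariance. This uses a different description of $\delta$ that is manifestly natural. The map $\Delta:A\to C$ is not linear, but it is additive modulo the image of the symmetrization $s:A\otimes A\to C$, $s(a\otimes b)=a\otimes b+b\otimes a$. In fact $\Delta$ induces a $k$-linear map $A\to C/s(A\otimes A)$, because $\Delta(a+b)-\Delta(a)-\Delta(b)=s(a\otimes b)$ and scalars lie in $\mathbb{F}_2$. This map is an isomorphism, which one sees from the basis: $C/\mathrm{im}(s)$ has basis given by the classes of $x_i\otimes x_i$. It is $\SL_3(R)$-equivariant, indeed $\GL(A)$-equivariant, because $g\Delta(a)=\Delta(ga)$ and $s$ commutes with the diagonal action.

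Consequently $\delta$ is simply the composite
$$C\twoheadrightarrow C/\mathrm{im}(s)\xrightarrow{\ \sim\ }A,$$
where the second arrow is the inverse of the map induced by $\Delta$. This composite agrees with the basis formula above, since $\delta$ kills $\mathrm{im}(s)$ and sends the class of $\Delta(x_i)$ to $x_i$. Both arrows are equivariant, so $\delta$ is equivariant. The only care needed is verifying that $\mathrm{im}(s)$ is exactly the span of the $x_i\otimes x_j+x_j\otimes x_i$ with $i\ne j$. This holds because $s(x_i\otimes x_i)=2\,x_i\otimes x_i=0$ in characteristic $2$.
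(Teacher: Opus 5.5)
Your proof is correct. The basis of $C$, the spanning argument, the definition of $\delta$ on the basis $\{x_i\otimes x_i\}\cup\{x_i\otimes x_j+x_j\otimes x_i\}_{i<j}$, and the verification $\delta(\Delta(a))=a$ are exactly what the paper does.

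The one genuine difference is how you get equivariance. The paper does it in one line, using two facts you had already established. For $l\in\SL_3(R)$, the linear maps $\delta\circ l$ and $l\circ\delta$ agree on every square, since $\delta(l\Delta(a))=\delta(\Delta(la))=la=l\delta(\Delta(a))$. Since the squares span $C$, the two maps agree everywhere.

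Your factorization $C\twoheadrightarrow C/\mathrm{im}(s)\xrightarrow{\sim}A$ is also correct. You properly check that $\mathrm{im}(s)$ is the span of the cross basis vectors, that $\Delta$ descends to a linear bijection onto the quotient, and that $s$ commutes with the diagonal action. But it is a detour rather than the main difficulty you describe. What it buys is an intrinsic description $\ker\delta=\mathrm{im}(s)=\operatorname{span}_k\{\beta(a,b)\}$ and a definition of $\delta$ that is visibly independent of the basis and its ordering.

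The paper's route gives the same canonicity at no extra cost. A linear map on $C$ is determined by its values on the spanning squares, so $\delta$ is the unique linear map with $\delta\circ\Delta=\mathrm{id}_A$. The same one-line computation then gives equivariance under all of $\GL_k(A)$, not just $\SL_3(R)$.
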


\begin{proof}
Choose a total order on the standard $k$-basis
$$
  \mathcal B=\{t^n e_j:n\geq0,\ 1\leq j\leq3\}
$$
of $A$. The fixed space $C$ has basis
$$
 \{b\otimes b:b\in\mathcal B\}
 \cup
 \{b\otimes b'+b'\otimes b:b,b'\in\mathcal B,\ b<b'\}.
$$
Define $\delta$ on this basis by
$$
 \delta(b\otimes b)=b,
 \qquad
 \delta(b\otimes b'+b'\otimes b)=0.
$$
By expanding in characteristic two, we obtain $\delta(\Delta(a))=a$. Furthermore,
$$
 \Delta(b+b')+\Delta(b)+\Delta(b')
 =b\otimes b'+b'\otimes b,
$$
so the square tensors span the displayed basis of $C$. For $l\in\SL_3(R)$,
$$
 \delta(l\Delta(a))
 =\delta(\Delta(la))
 =la
 =l\delta(\Delta(a)).
$$
Since the square tensors span $C$, this proves $\SL_3(R)$-equivariance. Surjectivity follows from $\delta(\Delta(a))=a$.
\end{proof}

Let $V=k^4$ with coordinates
$$
  v=(a_1,b_1,a_2,b_2)
$$
and standard symplectic form
$$
 \omega(v,w)=a_1b'_1+b_1a'_1+a_2b'_2+b_2a'_2.
$$
Thus, with respect to the ordered symplectic basis
$$
 e_1=(1,0,0,0),\quad f_1=(0,1,0,0),\quad
 e_2=(0,0,1,0),\quad f_2=(0,0,0,1),
$$
the matrix of $\omega$ is
$$
 J=
 \begin{pmatrix}
  0&1&0&0\\
  1&0&0&0\\
  0&0&0&1\\
  0&0&1&0
 \end{pmatrix}.
$$
The symplectic group of $(V,\omega)$ is
$$
 \Sp(V,\omega)
 =
 \{q\in\GL_k(V):
   \omega(qv,qw)=\omega(v,w)\ \text{for all }v,w\in V\}.
$$
Equivalently, in the above basis,
$$
 \Sp(V,\omega)
 =
 \{q\in\GL_4(k):q^{\mathsf T}Jq=J\}.
$$
Set
$$
 Q=\Sp(V,\omega)=\Sp_4(k)
$$
and consider the quadratic refinement
$$
 r_0(v)=a_1b_1+a_2b_2.
$$
For functionals on $V$, use the left action
$$
 (q\cdot f)(v)=f(q^{-1}v).
$$
Define
$$
 \ell_q=q\cdot r_0-r_0\qquad(q\in Q).
$$

Recall that, for a $Q$-module $M$, a map $c:Q\to M$, $q\mapsto c_q$, is a $1$-cocycle if $c_{pq}=p\cdot c_q+c_p$ for all $p,q\in Q$.
\begin{lemma}\label{lem:finite-cocycle}
Each $\ell_q$ is a linear functional in $V^*$, and the map $q\mapsto\ell_q$ is a $1$-cocycle, i.e.,
$$
  \ell_{pq}=p\cdot\ell_q+\ell_p
  \qquad(p,q\in Q).
$$
\end{lemma}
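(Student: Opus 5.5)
We prove linearity of $\ell_q$ from the fact that $r_0$ is a quadratic refinement of $\omega$. The cocycle identity then follows by a formal telescoping computation.

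\textbf{Step 1: polarization.} First check directly from the coordinates that
$$
 r_0(v+w)-r_0(v)-r_0(w)=\omega(v,w)\qquad(v,w\in V).
$$
Indeed, $(a_1+a_1')(b_1+b_1')-a_1b_1-a_1'b_1'=a_1b_1'+a_1'b_1$, and likewise for the second pair. Also $r_0(0)=0$.

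\textbf{Step 2: $\ell_q$ is linear.} Set $s=q\cdot r_0$, so $s(v)=r_0(q^{-1}v)$. Since $q^{-1}\in Q$ preserves $\omega$, the polarization of $s$ is
$$
 s(v+w)-s(v)-s(w)=\omega(q^{-1}v,q^{-1}w)=\omega(v,w).
$$
Hence $\ell_q=s-r_0$ satisfies $\ell_q(v+w)=\ell_q(v)+\ell_q(w)$, because the $\omega$ terms cancel. Over $k=\Z/2\Z$, additivity already gives $k$-linearity, since the only scalars are $0$ and $1$. Therefore $\ell_q\in V^*$.

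\textbf{Step 3: the cocycle identity.} The formula $(q\cdot f)(v)=f(q^{-1}v)$ defines a left action of $Q$ on all functions $V\to k$. It is linear in $f$ and satisfies $p\cdot(q\cdot f)=(pq)\cdot f$, since $(pq)^{-1}=q^{-1}p^{-1}$. Moreover, it restricts to the given action on $V^*$. We then compute in the space of all functions:
$$
 p\cdot\ell_q+\ell_p
 =p\cdot(q\cdot r_0)-p\cdot r_0+p\cdot r_0-r_0
 =(pq)\cdot r_0-r_0
 =\ell_{pq}.
$$

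The only step needing any care is Step 2, namely identifying the polarization of $r_0$ with $\omega$ and using that $Q$ preserves $\omega$. Everything else is a formal coboundary telescoping: $\ell$ is the coboundary of $r_0$ in the module of all functions, and it takes values in the submodule $V^*$.
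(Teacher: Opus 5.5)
Your proposal is correct and follows the paper's proof essentially step for step. The paper also identifies the polarization of $r_0$ with $\omega$, uses $\omega$-invariance under $q^{-1}$ to get $B_{\ell_q}=0$ and hence additivity (so $k$-linearity over $\mathbb{Z}/2\mathbb{Z}$), and then derives the cocycle identity by the same telescoping computation $\ell_{pq}=p\cdot(q\cdot r_0-r_0)+(p\cdot r_0-r_0)$.
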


\begin{proof}
For a function $r:V\to k$ satisfying $r(0)=0$, write
$$
 B_r(v,w)=r(v+w)-r(v)-r(w)
$$
for its polarization. If $v=(a_1,b_1,a_2,b_2)$ and $w=(a'_1,b'_1,a'_2,b'_2)$, then, by direct expansion, we have
$$
\begin{aligned}
 B_{r_0}(v,w)
 &=r_0(v+w)-r_0(v)-r_0(w)\\
 &=a_1b'_1+b_1a'_1+a_2b'_2+b_2a'_2
 =\omega(v,w).
\end{aligned}
$$
For $q\in Q$, by the definition of the action and the linearity of $q^{-1}$, we have
$$
\begin{aligned}
 B_{q\cdot r_0}(v,w)
 &=r_0(q^{-1}(v+w))-r_0(q^{-1}v)-r_0(q^{-1}w)\\
 &=B_{r_0}(q^{-1}v,q^{-1}w)\\
 &=\omega(q^{-1}v,q^{-1}w)
 =\omega(v,w),
\end{aligned}
$$
where the last equality follows from $q\in\Sp(V,\omega)$. Hence
$$
 B_{\ell_q}
 =B_{q\cdot r_0}-B_{r_0}
 =\omega-\omega=0.
$$
Moreover, $\ell_q(0)=0$, so $B_{\ell_q}=0$ says precisely that
$$
 \ell_q(v+w)=\ell_q(v)+\ell_q(w)
 \qquad(v,w\in V).
$$
Thus $\ell_q$ is additive and therefore $k$-linear. Finally,
$$
 \ell_{pq}=pq\cdot r_0-r_0
 =p\cdot(q\cdot r_0-r_0)+(p\cdot r_0-r_0)
 =p\cdot\ell_q+\ell_p.
$$
\end{proof}

Let
$$
  D=(A\otimes V^*)\oplus C,
  \qquad
  H=\SL_3(R)\times Q.
$$
The action of $H$ on $A\otimes V^*$ is
$$
 (l,q)\cdot(a\otimes\varphi)=la\otimes(q\cdot\varphi).
$$
Define two actions on the elementary abelian group $D$ by
\begin{align}
 \theta_1(l,q)(u,c)
  &=\bigl((l,q)\cdot u,l\cdot c\bigr),\label{eq:theta1}\\
 \theta_2(l,q)(u,c)
  &=\bigl((l,q)\cdot u+l\delta(c)\otimes\ell_q,l\cdot c\bigr).
  \label{eq:theta2}
\end{align}

\begin{lemma}\label{lem:actions}
The maps $\theta_1$ and $\theta_2$ are actions of $H$ on $D$.
\end{lemma}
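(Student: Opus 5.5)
We have to check three things for each $\theta_i$: every $\theta_i(h)$ is an additive map $D\to D$, $\theta_i(1)=\mathrm{id}$, and $\theta_i(hh')=\theta_i(h)\theta_i(h')$. Bijectivity of each $\theta_i(h)$ then comes for free, since $\theta_i(h^{-1})$ is a two-sided inverse.

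For $\theta_1$ there is nothing to do. The action is the direct sum of two standard actions: the tensor product action of $H$ on $A\otimes V^*$, and the action on $C$ that factors through $\SL_3(R)$ acting diagonally on $A\otimes A$ and preserving the flip-fixed subspace $C$.

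For $\theta_2$, additivity holds because $c\mapsto l\delta(c)\otimes\ell_q$ is additive; this uses the linearity of $\delta$ from Lemma~\ref{lem:delta}. For the identity, Lemma~\ref{lem:finite-cocycle} with $p=q=1$ gives $\ell_1=0$, so $\theta_2(1)=\mathrm{id}$. The main step is the composition law. Take $h=(l,q)$ and $h'=(l',q')$ and compute $\theta_2(h)\theta_2(h')(u,c)$.

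\begin{itemize}
\item First, $\theta_2(h')(u,c)=\bigl(h'\cdot u+l'\delta(c)\otimes\ell_{q'},\ l'c\bigr)$.
\item Applying $\theta_2(h)$, the $C$-component becomes $ll'c$.
\item The $A\otimes V^*$-component becomes
$$
hh'\cdot u+ll'\delta(c)\otimes q\cdot\ell_{q'}+l\delta(l'c)\otimes\ell_q .
$$
\item Equivariance of $\delta$ from Lemma~\ref{lem:delta} gives $l\delta(l'c)=ll'\delta(c)$.
\item The last two terms therefore combine, by bilinearity of $\otimes$, into $ll'\delta(c)\otimes(q\cdot\ell_{q'}+\ell_q)$.
\item By the cocycle identity of Lemma~\ref{lem:finite-cocycle}, $q\cdot\ell_{q'}+\ell_q=\ell_{qq'}$.
\end{itemize}
The result is exactly $\theta_2(hh')(u,c)$, so $\theta_2$ is an action.

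The only point needing care is the interplay between the two factors. The correction term pairs the $\SL_3(R)$-equivariance of $\delta$ with the $Q$-cocycle property of $\ell$. This works because $H$ is a direct product and $(l,q)$ acts factorwise on $A\otimes V^*$, so $(l,q)\cdot(a\otimes\varphi)=la\otimes q\varphi$ with no cross interaction. Both ingredients are supplied by the earlier lemmas, so I expect no real obstacle beyond this bookkeeping.
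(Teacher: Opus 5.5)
Your proposal is correct and follows the paper's proof: the composition law for $\theta_2$ reduces, via the $\SL_3(R)$-equivariance $\delta(l'c)=l'\delta(c)$ and the factorwise action of $H=\SL_3(R)\times Q$ on $A\otimes V^*$, to the cocycle identity $\ell_{qq'}=q\cdot\ell_{q'}+\ell_q$ of Lemma~\ref{lem:finite-cocycle}. Your extra checks of additivity and of $\theta_2(1)=\mathrm{id}$ (via $\ell_1=0$) are details the paper leaves implicit.
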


\begin{proof}
Only $\theta_2$ needs checking. Let $h=(l,p)$ and $h'=(l',q)$. Using $\delta(l'c)=l'\delta(c)$ and the fact that the $\SL_3(R)$- and $Q$-actions on $A\otimes V^*$ commute, we obtain
\begin{align*}
 \theta_2(h)\theta_2(h')(u,c)
 &=\bigl((ll',pq)\cdot u
   +ll'\delta(c)\otimes(p\cdot\ell_q+\ell_p),ll'c\bigr)\\
 &=\theta_2(ll',pq)(u,c)
\end{align*}
by Lemma~\ref{lem:finite-cocycle}.
\end{proof}

Let $D_i$ denote $D$ with the action $\theta_i$, and define
\begin{equation}\label{eq:groups}
 \boxed{\quad
 \Gamma_i=D_i\rtimes_{\theta_i}
 \bigl(\SL_3(R)\times\Sp_4(k)\bigr),
 \qquad i=1,2.
 \quad}
\end{equation}
Every constituent in \eqref{eq:groups} is countable, and $Q=\Sp_4(k)$ is finite. Thus both $\Gamma_i$ are countable discrete groups.

\section{Isomorphism of the group von Neumann algebras}\label{section: iso vna}

For any countable discrete $k$-module $M$, regard $(M,+)$ as a countable discrete abelian group and let $\widehat M=\Hom(M,\mathbb{S}^1)$ be the Pontryagin dual. For $\chi\in\widehat M$ and $m\in M$, $\chi(m)\in \mathbb{S}^1$ satisfies $\chi(m)^2=\chi(2m)=\chi(0)=1$, so $\chi(M)\subseteq\{\pm1\}$. Since the map $k=\Z/2\Z\to\{\pm1\}$ given by $s\mapsto(-1)^s$ is an isomorphism from the additive group of $k$ onto $\{\pm1\}$, every character of $M$ corresponds uniquely to a $k$-linear functional on $M$. We may therefore identify $\widehat M$ with $\operatorname{Hom}_k(M,k)$.

Under the pairing
$$
 \ip{z}{a\otimes\varphi}=\varphi(z(a)),
 \qquad
 z\in\Hom_k(A,V),
$$
the Pontryagin dual of $A\otimes V^*$ is
$$
 Z=\Hom_k(A,V).
$$
Let
$$
 \widehat C=\Hom_k(C,k),
 \qquad \widehat D=Z\times\widehat C,
$$
with their natural compact product topologies.

For
$$
 x,x'\in
 \Hom_k(A,k),
$$
regard $x\otimes x'$ as an element of $\widehat C$ by restricting the tensor-product functional on $A\otimes A$ to $C=(A\otimes A)^\mathrm{Flip}$. Write $z\in Z$ in coordinates as
$$
 z(a)=(x_1(a),x_2(a),x_3(a),x_4(a))
$$
and define
\begin{equation}\label{eq:R-def}
 \calR(z)=x_1\otimes x_2+x_3\otimes x_4\in\widehat C.
\end{equation}
On square tensors,
\begin{equation}\label{eq:R-square}
 \calR(z)(\Delta(a))=r_0(z(a)).
\end{equation}
The map $\calR:Z\to\widehat C$ is continuous because evaluation on a fixed $c\in C$ depends on only finitely many coordinates of $z$. It is also $\SL_3(R)$-equivariant: with the inverse-dual actions on $Z$ and $\widehat C$,
\begin{equation}\label{eq:SL3-equivariance-calR}
 \calR(z\circ l^{-1})(c)
 =\calR(z)((l^{-1}\otimes l^{-1})c).
\end{equation}
It suffices to verify this on the spanning square tensors, where it follows from \eqref{eq:R-square}.

For $h\in H$, let $\alpha_{i,h}$ be the inverse dual transformation
$$
 \alpha_{i,h}(\chi)=\chi\circ\theta_i(h)^{-1}
 \qquad(\chi\in\widehat D).
$$
For $l\in\SL_3(R)$ and $(z,y)\in \widehat{D}=Z\times \widehat C$, the two transformations coincide:
$$
 \alpha_{i,(l,1)}(z,y)
 =\bigl(z\circ l^{-1},y\circ(l^{-1}\otimes l^{-1})\bigr).
$$
For $q\in Q$,
$$
 \alpha_{1,(1,q)}(z,y)=(qz,y),
$$
where $(qz)(a)=q(z(a))$.

Since $\theta_2$ is a group action,
$$
\theta_2(1,q)^{-1}
=
\theta_2(1,q^{-1}),
$$
and therefore for $(u,c)\in D=(A\otimes V^*)\oplus C$
$$
\theta_2(1,q)^{-1}(u,c)
=
\bigl(q^{-1}\cdot u+\delta(c)\otimes\ell_{q^{-1}},c\bigr).
$$
For $(z,y)\in \widehat{D}=Z\times\widehat C$ and $(u,c)\in D$, the dual pairing is
$$
(z,y)(u,c)=\ip{z}{u}+y(c).
$$
It follows that
$$
\begin{aligned}
\bigl(\alpha_{2,(1,q)}(z,y)\bigr)(u,c)
&=(z,y)\bigl(\theta_2(1,q)^{-1}(u,c)\bigr)\\
&=\ip{z}{q^{-1}\cdot u}
  +\ip{z}{\delta(c)\otimes\ell_{q^{-1}}}
  +y(c)\\
&=\ip{qz}{u}
  +\ell_{q^{-1}}\bigl(z(\delta(c))\bigr)
  +y(c).
\end{aligned}
$$
Define $J_q(z)\in\widehat C=\operatorname{Hom}_k(C,k)$ by
\begin{equation}\label{eq:J-def}
J_q(z)(c)
=
\ell_{q^{-1}}\bigl(z(\delta(c))\bigr).
\end{equation}
Then the preceding computation becomes
$$
\bigl(\alpha_{2,(1,q)}(z,y)\bigr)(u,c)
=
\ip{qz}{u}+\bigl(y+J_q(z)\bigr)(c),
$$
and hence
\begin{equation}\label{eq:dual-triangular}
\alpha_{2,(1,q)}(z,y)
=
\bigl(qz,y+J_q(z)\bigr).
\end{equation}

\begin{lemma}\label{lem:defect}
For every $q\in Q$ and $z\in Z$,
$$
 J_q(z)=\calR(qz)-\calR(z).
$$
\end{lemma}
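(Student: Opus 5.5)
Both sides of the identity are $k$-linear functionals on $C$. By Lemma~\ref{lem:delta} the square tensors $\Delta(a)$, $a\in A$, span $C$. It therefore suffices to check the identity after evaluating both sides on $\Delta(a)$ for an arbitrary $a\in A$.

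For the left side, we use $\delta(\Delta(a))=a$ from Lemma~\ref{lem:delta} and the definition \eqref{eq:J-def}:
$$
J_q(z)(\Delta(a))=\ell_{q^{-1}}\bigl(z(a)\bigr).
$$
For the right side, we use \eqref{eq:R-square} applied to both $qz$ and $z$:
$$
\calR(qz)(\Delta(a))-\calR(z)(\Delta(a))=r_0\bigl(q(z(a))\bigr)-r_0\bigl(z(a)\bigr).
$$
Here we use that $(qz)(a)=q(z(a))$.

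Setting $v=z(a)\in V$, it remains to show
$$
\ell_{q^{-1}}(v)=r_0(qv)-r_0(v)\qquad(v\in V).
$$
This follows directly from the definitions. Indeed, $\ell_{q^{-1}}=q^{-1}\cdot r_0-r_0$. With the left action $(p\cdot f)(v)=f(p^{-1}v)$ and $p=q^{-1}$, this gives
$$
(q^{-1}\cdot r_0)(v)=r_0(qv).
$$
Hence $\ell_{q^{-1}}(v)=r_0(qv)-r_0(v)$, as required. Since we are in characteristic two, signs play no role.

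There is no genuine obstacle here. The only points needing care are the following:
\begin{itemize}
\item Both sides must be honestly linear on $C$, so that the reduction to the spanning set is legitimate. For $J_q(z)$ this holds because $\delta$, $z$ and $\ell_{q^{-1}}$ are linear, the last by Lemma~\ref{lem:finite-cocycle}. For $\calR(\cdot)$ it holds by construction, as an element of $\widehat C$.
\item One must correctly track the inverse in the convention for the action on functionals.
\end{itemize}
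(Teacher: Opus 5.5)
Your proposal is correct and follows essentially the same route as the paper: you evaluate both sides on the square tensors $\Delta(a)$, using $\delta(\Delta(a))=a$, \eqref{eq:R-square}, and $(q^{-1}\cdot r_0)(v)=r_0(qv)$, and then extend to all of $C$ because the $\Delta(a)$ span $C$ (Lemma~\ref{lem:delta}). Your added remarks, that $J_q(z)$ is genuinely linear and that the inverse in the action convention must be tracked, are accurate and make explicit what the paper leaves implicit.
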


\begin{proof}
By equation~(\ref{eq:R-square}), for every $a\in A$, we have
\begin{align*}
 J_q(z)(\Delta(a))
 &=\ell_{q^{-1}}(z(a))\\
 &=r_0(qz(a))-r_0(z(a))\\
 &=\calR(qz)(\Delta(a))
   -\calR(z)(\Delta(a)).
\end{align*}
Since $\Delta(A)$ spans $C$ by Lemma~\ref{lem:delta}, the two functionals agree on all of $C$.
\end{proof}

Define the fiber shear
\begin{equation}\label{eq:F-def}
 F:\widehat D\longrightarrow\widehat D,
 \qquad F(z,y)=(z,y+\calR(z)).
\end{equation}

\begin{proposition}\label{prop:dual-conjugacy}
The map $F$ is a Haar-measure-preserving homeomorphism and
$$
 F\alpha_{1,h}F^{-1}=\alpha_{2,h}\qquad(h\in H).
$$
\end{proposition}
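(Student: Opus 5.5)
The plan has two parts: first the topological and measure-theoretic properties of $F$, then the intertwining relation, which only needs to be checked on generators of $H$.

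\textbf{Properties of $F$.} The map $F$ is continuous because $\calR$ is continuous, as noted after \eqref{eq:SL3-equivariance-calR}. In characteristic two, $F\circ F(z,y)=(z,y+2\calR(z))=(z,y)$, so $F$ is an involution, $F=F^{-1}$, and hence a homeomorphism.

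For Haar measure, write $\mu_{\widehat D}=\mu_Z\otimes\mu_{\widehat C}$ on $\widehat D=Z\times\widehat C$. For each fixed $z$, the fiber map $y\mapsto y+\calR(z)$ is a translation of the compact group $\widehat C$, so it preserves $\mu_{\widehat C}$. Fubini then gives, for a Borel set $E\subseteq\widehat D$,
$$
\mu_{\widehat D}(F^{-1}E)=\int_Z\mu_{\widehat C}\bigl(\{y:(z,y+\calR(z))\in E\}\bigr)\,\d\mu_Z(z)=\mu_{\widehat D}(E).
$$
The measurability needed for Fubini holds because $F$ is continuous.

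\textbf{Intertwining.} Both $h\mapsto F\alpha_{1,h}F^{-1}$ and $h\mapsto\alpha_{2,h}$ are actions of $H$. Since $H=\SL_3(R)\times Q$ is generated by the elements $(l,1)$ and $(1,q)$, it suffices to check $F\alpha_{1,h}=\alpha_{2,h}F$ for these two types of $h$.

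\emph{Case $h=(l,1)$.} Here $\alpha_{1,h}=\alpha_{2,h}$, given by $(z,y)\mapsto(z\circ l^{-1},\,y\circ(l^{-1}\otimes l^{-1}))$. The identity $F\alpha_{1,h}=\alpha_{2,h}F$ reduces to
$$
\calR(z\circ l^{-1})=\calR(z)\circ(l^{-1}\otimes l^{-1}).
$$
This is exactly the $\SL_3(R)$-equivariance \eqref{eq:SL3-equivariance-calR}. The map $y\mapsto y\circ(l^{-1}\otimes l^{-1})$ is additive, so it distributes over the sum $y+\calR(z)$ appearing in $F$.

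\emph{Case $h=(1,q)$.} On one side,
$$
F\alpha_{1,h}(z,y)=F(qz,y)=(qz,\,y+\calR(qz)).
$$
On the other side, by \eqref{eq:dual-triangular},
$$
\alpha_{2,h}F(z,y)=\alpha_{2,h}(z,y+\calR(z))=(qz,\,y+\calR(z)+J_q(z)).
$$
By Lemma~\ref{lem:defect}, $J_q(z)=\calR(qz)-\calR(z)$, so the two sides agree.

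The only genuine work is this second case, and it has already been isolated in Lemma~\ref{lem:defect}. What remains is bookkeeping: confirming that the inverse-dual conventions make $h\mapsto F\alpha_{1,h}F^{-1}$ a homomorphism, which it is because conjugation preserves composition.
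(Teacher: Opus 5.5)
Your proposal is correct and takes essentially the same route as the paper. Like the paper, you use that $F$ is an involution in characteristic two together with continuity of $\calR$, and you get Haar invariance from fiberwise translation invariance plus Fubini. You then verify the intertwining on the generators $(l,1)$ and $(1,q)$ via \eqref{eq:SL3-equivariance-calR} and Lemma~\ref{lem:defect}; the only cosmetic difference is that you check $F\alpha_{1,h}=\alpha_{2,h}F$ instead of expanding $F\alpha_{1,h}F^{-1}$ directly.
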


\begin{proof}
The inverse of $F$ is $F^{-1}(z,y)=(z,y-\calR(z))$. Since $\widehat C$ is a $\Z/2\Z$-module, one also has $F^{-1}=F$. Thus continuity of $\calR$ makes $F$ a homeomorphism. For $q\in Q$, by Lemma~\ref{lem:defect} and \eqref{eq:dual-triangular}, we have
\begin{align*}
 F\alpha_{1,(1,q)}F^{-1}(z,y)
 &=F\alpha_{1,(1,q)}\bigl(z,y-\calR(z)\bigr)\\
 &=F\bigl(qz,y-\calR(z)\bigr)\\
 &=\bigl(qz,y-\calR(z)+\calR(qz)\bigr)\\
 &=\bigl(qz,y+J_q(z)\bigr)\\
 &=\alpha_{2,(1,q)}(z,y).
\end{align*}

For $l\in \SL_3(R)$,
\begin{align*}
F\alpha_{1,(l,1)}F^{-1}(z,y)
&=
F\alpha_{1,(l,1)}\bigl(z,y-\calR(z)\bigr)\\
&=
F\Bigl(
z\circ l^{-1},
\bigl(y-\calR(z)\bigr)\circ
(l^{-1}\otimes l^{-1})
\Bigr)\\
&=
\Bigl(
z\circ l^{-1},
y\circ(l^{-1}\otimes l^{-1})
-\calR(z)\circ(l^{-1}\otimes l^{-1})
+\calR(z\circ l^{-1})
\Bigr)\\
&=
\Bigl(
z\circ l^{-1},
y\circ(l^{-1}\otimes l^{-1})
\Bigr)\\
&=
\alpha_{2,(l,1)}(z,y),
\end{align*}
where the fourth equality follows from \eqref{eq:SL3-equivariance-calR}.

Since $\SL_3(R)$ and $Q$ generate $H$, strict conjugacy follows for every $h\in H$.

Let $\mu_Z$ and $\mu_{\widehat C}$ be the Haar probability measures and define $\mu_{\widehat D}=\mu_Z\otimes\mu_{\widehat C}$. For every bounded Borel function $g$,
\begin{align*}
 \int_{\widehat D} g\circ F\,{\d}\mu_{\widehat D}
 &=\int_Z\int_{\widehat C}
   g(z,y+\calR(z))\,{\d}\mu_{\widehat C}(y)\,{\d}\mu_Z(z)\\
 &=\int_Z\int_{\widehat C}
   g(z,y)\,{\d}\mu_{\widehat C}(y)\,{\d}\mu_Z(z),
\end{align*}
because translation by $\calR(z)$ preserves Haar measure on each $\widehat C$-fiber.
\end{proof}

\begin{remark}\label{rem:not-additive}
The map $\calR$ is generally quadratic rather than additive, so $F$ need not be a compact-group automorphism. The proof uses only that $F$ is a Haar-preserving homeomorphism and a strict conjugacy of the two actions.
\end{remark}

\begin{proposition}\label{prop:factor-iso}
There is a $*$-isomorphism
$$
\VN(\Gamma_1)\cong\VN(\Gamma_2).
$$
\end{proposition}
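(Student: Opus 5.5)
The plan is to combine the Fourier-transform identification $L(D\rtimes_{\theta_i}H)\cong L^\infty(\widehat D)\rtimes_{\alpha_i}H$ with Proposition~\ref{prop:dual-conjugacy}. The key steps, in order, are as follows.

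\textbf{Step 1: the Fourier identification.} For each $i$, inside $\VN(\Gamma_i)$ the unitaries $u_d$ ($d\in D$) generate the abelian von Neumann algebra $\VN(D)$. The Fourier transform gives a trace-preserving $*$-isomorphism $\mathcal F:\VN(D)\to L^\infty(\widehat D,\mu_{\widehat D})$ with $u_d\mapsto\widehat d$, where $\widehat d(\chi)=\chi(d)=(-1)^{\chi(d)}$ under the identification of $\widehat D$ with $\Hom_k(D,k)$. Here the trace $\tau(u_d)=\delta_{d,0}$ matches $\int\widehat d\,\d\mu_{\widehat D}$.

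\textbf{Step 2: compatibility of the actions.} Conjugation by $u_h$ ($h\in H$) implements $u_d\mapsto u_{\theta_i(h)d}$. I will check that this corresponds to the Koopman automorphism $f\mapsto f\circ\alpha_{i,h}^{-1}$. Indeed,
$$
\widehat{\theta_i(h)d}(\chi)=\chi(\theta_i(h)d)=\widehat d\bigl(\chi\circ\theta_i(h)\bigr)=\widehat d\bigl(\alpha_{i,h}^{-1}\chi\bigr).
$$

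\textbf{Step 3: identification with the crossed product.} Since $\tau(u_du_h)=0$ for $h\ne1$, the standard uniqueness of crossed products applies. This is the universal property of the trace on the $*$-algebra generated by $\VN(D)$ and $\{u_h\}$ satisfying the covariance relation, with traces matching on the dense subalgebra and extending to a unitary between the GNS spaces. It gives $\VN(\Gamma_i)\cong L^\infty(\widehat D)\rtimes_{\alpha_i}H$ as tracial von Neumann algebras.

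\textbf{Step 4: transport along $F$.} By Proposition~\ref{prop:dual-conjugacy}, $F$ is a measure-preserving homeomorphism with $F\alpha_{1,h}F^{-1}=\alpha_{2,h}$. Hence $\pi(f)=f\circ F^{-1}$ is a trace-preserving $*$-automorphism of $L^\infty(\widehat D)$ satisfying
$$
\pi\bigl(f\circ\alpha_{1,h}^{-1}\bigr)=\pi(f)\circ\alpha_{2,h}^{-1}.
$$
Then $\pi(f)u_h\mapsto$ defines a map from finite sums $\sum f_hu_h$ in the first crossed product to the second: $\sum f_hu_h\mapsto\sum\pi(f_h)u_h$. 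It is a $*$-homomorphism by the covariance relation. It preserves the trace $\tau(\sum f_hu_h)=\int f_1\,\d\mu$, because $F$ preserves $\mu$. It therefore extends to a unitary $L^2$-isomorphism, and hence to a $*$-isomorphism of the crossed products.

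\textbf{Main obstacle.} The main point to handle carefully is that $F$ is not a group automorphism. One must therefore work with crossed products of measure spaces rather than with group algebras directly, and confirm that only measure preservation and strict (everywhere, not merely almost everywhere) conjugacy are needed, as noted in Remark~\ref{rem:not-additive}. Composing the isomorphisms of Steps 1--4 then yields $\VN(\Gamma_1)\cong\VN(\Gamma_2)$.
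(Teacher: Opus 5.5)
Your proposal is correct and follows the same route as the paper. Both use the Fourier transform to identify $\VN(\Gamma_i)\cong L^\infty(\widehat D,\mu_{\widehat D})\rtimes_{\alpha_i}H$, then transport along the Haar-preserving strict conjugacy $F$ of Proposition~\ref{prop:dual-conjugacy}; you additionally spell out the covariance check and the trace-preserving extension argument that the paper leaves implicit.
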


\begin{proof}
Let $D_i$ denote $D$ with the action $\theta_i$. Then for $i=1,2$,
$$\Gamma_i=D_i\rtimes H.$$
By Fourier transform on the discrete abelian group $D_i$, we have
$$
 \VN(\Gamma_i)\cong L(D_i)\rtimes H
 \cong L^\infty(\widehat D_i,\mu_{\widehat D_i})
 \rtimes H.
$$
Define $F:(\widehat D_1,\mu_{\widehat D_1})\to (\widehat D_2,\mu_{\widehat D_2})$ as in (\ref{eq:F-def}). By Proposition~\ref{prop:dual-conjugacy}, $F$ is an $H$-equivariant measure-preserving isomorphism between the two nonsingular $H$-spaces. Therefore,
$$\VN(\Gamma_1)\cong L^\infty(\widehat D_1,\mu_{\widehat D_1})\rtimes H\cong L^\infty(\widehat D_2,\mu_{\widehat D_2})\rtimes H\cong L(\Gamma_2).$$
\end{proof}

\section{Property (T)}
We first prove that $\SL_3(R)$ has property~(T). The systematic study of property~(T) for linear groups over general rings was initiated by Shalom's work on bounded generation and relative property~(T) \cite{Sha99}. Shalom subsequently proved that $\EL_n(S)$ has property~(T) for every finitely generated commutative unital ring $S$ satisfying $n\geq 2+\operatorname{Kdim}(S)$ \cite[Theorem~1.1]{Sha06}, where $\operatorname{Kdim}(S)$ denotes the Krull dimension of $S$. This applies directly to $R=(\Z/2\Z)[t]$. Subsequent work established broader versions for arbitrary finitely generated associative rings and for groups graded by root systems \cite{EJ,EJK}.

Let $\EL_3(R)$ denote the subgroup of $\GL_3(R)$ generated by the elementary matrices $I+rE_{ij}$, where $i,j\in\{1,2,3\}$, $i\ne j$, and $r\in R$.
\begin{proposition}\label{prop:SL3-T}
For $R=k[t]$, one has
$$
 \SL_3(R)=\EL_3(R),
$$
and this group has property~(T).
\end{proposition}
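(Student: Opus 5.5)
The proof splits into two independent parts: the equality $\SL_3(R)=\EL_3(R)$, which is classical algebra, and property~(T) for $\EL_3(R)$, which we take from Shalom's theorem \cite[Theorem~1.1]{Sha06}.

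For the equality, the key input is that $R=k[t]$ is a Euclidean domain, via the degree function. Given $g\in\SL_3(R)$, we reduce it to the identity by elementary row and column operations, each of which is multiplication by some $I+rE_{ij}$.

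\begin{enumerate}
\item Run the Euclidean algorithm on the entries of the first column using row operations $R_i\mapsto R_i+rR_j$. Repeated division with remainder lowers the minimal degree of the nonzero entries, so eventually the first column has a single nonzero entry. Since $\det g=1$, this entry is a unit of $R$, hence equals $1$, because $R^\times=k^\times=\{1\}$.
\item Move this entry to position $(1,1)$ using elementary operations. The usual trick is that $\begin{pmatrix}0&-1\\1&0\end{pmatrix}$ is a product of three elementary matrices. In characteristic $2$ the sign is irrelevant anyway.
\item Clear the rest of the first row and the first column, leaving $\operatorname{diag}(1,g')$ with $g'\in\SL_2(R)$.
\item Repeat the argument on $g'$ to reduce to a diagonal matrix with unit entries. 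All such entries equal $1$ here; in general one would use Whitehead's lemma.
\end{enumerate}

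Hence $g\in\EL_3(R)$. Alternatively, one can simply cite the standard fact that $\SL_n(S)=E_n(S)$ for every Euclidean ring $S$.

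For property~(T), we check the hypotheses of Shalom's theorem. The ring $R=k[t]$ is a commutative unital ring, finitely generated over $\Z$ by $t$ (since $k=\Z/2\Z$ is a quotient of $\Z$). Its Krull dimension is $1$, since $R$ is a PID that is not a field. So $n=3\geq 2+\operatorname{Kdim}(R)=3$, and the theorem gives that $\EL_3(R)$ has property~(T). By the first part, this group equals $\SL_3(R)$.

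The main point requiring care is checking that Shalom's hypotheses are exactly met: finite generation as a ring, and the precise bound $n\geq 2+\operatorname{Kdim}$ being attained with equality. If there is any doubt about the form of Shalom's statement, the fallback is the Ershov--Jaikin-Zapirain theorem \cite{EJ}, which gives property~(T) for $\EL_n(S)$ with $n\geq3$ over any finitely generated associative ring $S$. That result applies immediately to $R$ and avoids Krull dimension entirely. The Euclidean reduction itself is routine.
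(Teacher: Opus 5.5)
Your proposal is correct and follows essentially the same route as the paper: you reduce $g\in\SL_3(R)$ to the identity by Euclidean-algorithm elementary operations, using $R^\times=\{1\}$ and the fact that a row interchange is a product of three elementary matrices, and then you apply \cite[Theorem~1.1]{Sha06} with $n=3\geq 2+\operatorname{Kdim}(R)=3$. The differences are cosmetic and do not affect the argument: you also use column operations where the paper uses only row operations, and, strictly speaking, a division step with zero remainder lowers the number of nonzero entries rather than the minimal degree.
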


\begin{proof}
We first prove that $\SL_3(R)=\EL_3(R)$. A much more general result $$\SL_n(F[t_1,t_2,...,t_m])=\EL_n(F[t_1,t_2,...,t_m])$$ for general field $F$ and $n\geq3$ was established by Suslin \cite[Corollary~6.7]{Sus77}; for completeness, we include an elementary proof for the present special case $R=(\Z/2\Z)[t]$.

For $i,j\in\{1,2,3\}$ with $i\ne j$ and $r\in R$, let
$$
 e_{ij}(r)=I+rE_{ij}.
$$
By definition, $\EL_3(R)$ is generated by these elementary matrices. Since every $e_{ij}(r)$ has determinant one,
$$
 \EL_3(R)\leq\SL_3(R).
$$

Conversely, $R=k[t]$ is a Euclidean domain with degree as a Euclidean function, and its only unit is $1$. Before applying the Euclidean algorithm, observe that both row additions and row interchanges can be performed using elementary matrices. Left multiplication by $I+fE_{ij}$ performs the row operation
$$
\operatorname{row}_i
\longmapsto
\operatorname{row}_i+f\operatorname{row}_j.
$$
Moreover, in characteristic two,
$$
\begin{pmatrix}
0&1\\
1&0
\end{pmatrix}
=
\begin{pmatrix}
1&1\\
0&1
\end{pmatrix}
\begin{pmatrix}
1&0\\
1&1
\end{pmatrix}
\begin{pmatrix}
1&1\\
0&1
\end{pmatrix}.
$$
Embedding this identity into any two coordinates of a $3\times3$ matrix shows that every row interchange is also a product of elementary matrices.

Now let $g\in\SL_3(R)$ and write its first column as $(a,b,c)^{\mathsf T}$. Expanding $\det(g)=1$ along the first column produces $x,y,z\in R$ such that
$$
ax+by+cz=1.
$$
Thus
$$
(a,b,c)=R.
$$
The Euclidean algorithm, implemented by row additions and row interchanges, therefore transforms the first column into $(1,0,0)^{\mathsf T}$. Hence there exists a product $u_1$ of elementary matrices such that
$$
u_1g=
\begin{pmatrix}
1&x_1&x_2\\
0&a_1&a_2\\
0&a_3&a_4
\end{pmatrix}.
$$
Since every elementary matrix has determinant one,
$$
1=\det(u_1g)
=\det
\begin{pmatrix}
a_1&a_2\\
a_3&a_4
\end{pmatrix}.
$$
Thus the lower-right $2\times2$ block belongs to $\SL_2(R)$. Applying the Euclidean algorithm to its first column, using only operations on the second and third rows, reduces that column to $(1,0)^{\mathsf T}$. The determinant condition then forces the lower-right entry to be $1$, and one final row addition clears its remaining off-diagonal entry. Consequently, another product $u_2$ of elementary matrices gives
$$
u_2u_1g=
\begin{pmatrix}
1&x_1&x_2\\
0&1&0\\
0&0&1
\end{pmatrix}.
$$
Adding $x_1$ times the second row and $x_2$ times the third row to the first row reduces this matrix to $I$. We have therefore found $u_1,u_2,u_3\in \EL_3(R)$ such that
$$
 u_3u_2 u_1g=I.
$$
The inverse of an elementary matrix is elementary, so
$$
 g=u_1^{-1}u_2^{-1} u_3^{-1}\in\EL_3(R).
$$
Therefore
$$
\SL_3(R)=\EL_3(R).
$$

The ring $R=(\Z/2\Z)[t]$ is a finitely generated commutative unital ring of Krull dimension one. Hence, by \cite[Theorem~1.1]{Sha06} (see also \cite{EJ,EJK}), the group $\EL_3(R)$ has property~(T). The equality just proved therefore implies that $\SL_3(R)$ has property~(T).
\end{proof}

Recall that a pair $(G,N)$ of groups with $N<G$ has \textbf{relative property~(T)} if every unitary representation of $G$ admitting almost invariant unit vectors admits a nonzero $N$-invariant vector. We now prove that $(D_i\rtimes\SL_3(R),D_i)$ has relative property~(T). The proof strategy is in the same spirit as Shalom's argument in \cite{Sha99}, which establishes relative property~(T) for $(S^2\rtimes \SL_2(S),S^2)$, where $S$ is a finitely generated commutative unital ring.

For $a,b\in A$, let
$$
 \beta(a,b)=a\otimes b+b\otimes a\in C=(A\otimes A)^\mathrm{Flip}.
$$
Then
$$
 \Delta(a+b)
 =\Delta(a)+\Delta(b)+\beta(a,b).
$$
For $i=1,2,3$, let $e_i\in A=R^3$ be
$$e_1=(1,0,0),\, e_2=(0,1,0),\, e_3=(0,0,1).$$
\begin{lemma}
\label{lem:chart-exhaustion}
For $N\geq1$, let
$$
 \mathcal P_N
 =\operatorname{span}_k\{1,t,\ldots,t^{N-1}\}
 \subset R=k[t].
$$
For $\{s,j,m\}=\{1,2,3\}$, set
$$
 U_{s,N}=\{e_s+fe_j+he_m:f,h\in\mathcal P_N\}
$$
and
$$
 C_N=\operatorname{span}_k
 \{\Delta(v):v\in U_{1,N}\cup U_{2,N}\cup U_{3,N}\}.
$$
Then $C_N\subset C_{N+1}$ and $\bigcup_{N\geq1}C_N=C$.
\end{lemma}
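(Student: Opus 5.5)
Monotonicity is immediate: since $\mathcal P_N\subset\mathcal P_{N+1}$, we have $U_{s,N}\subset U_{s,N+1}$ for each $s$. Hence the spanning sets, and therefore $C_N\subset C_{N+1}$. For exhaustion, Lemma~\ref{lem:delta} (or its proof) shows that $C$ is spanned by the elements $\Delta(b)$ and $\beta(b,b')$ with $b,b'$ ranging over the monomial basis $\mathcal B=\{t^ne_j\}$. So it suffices to show each such element lies in some $C_N$. I will work with the identity $\Delta(a+b)=\Delta(a)+\Delta(b)+\beta(a,b)$, which shows that, modulo elements already in $C_N$, $\beta$ is controlled by squares of sums.

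\textbf{Step 1: squares of single monomials.} Fix $j$ and $n\ge0$. Pick $s\ne j$ and the remaining index $m$. For $N>n$ the vectors $e_s$, $e_s+t^ne_j$ lie in $U_{s,N}$ (take $h=0$). Expanding,
\[
\Delta(e_s+t^ne_j)=\Delta(e_s)+\Delta(t^ne_j)+\beta(e_s,t^ne_j).
\]
This alone does not isolate $\Delta(t^ne_j)$, because of the cross term. So I also use a second chart. By the symmetric argument with roles swapped, one would like $e_j+\dots$, but $t^ne_j$ has coefficient $t^n$ rather than $1$ on $e_j$. Instead, in characteristic two, use three vectors $e_s$, $e_s+t^ne_j$, $e_s+t^ne_j+e_m$ together with $e_s+e_m$. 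The alternating sum of their squares is $\beta(t^ne_j,e_m)$, which gives the $\beta$'s with one ``unit'' leg. The genuinely needed trick is that $\beta(e_s,fe_j)$ and $\beta(fe_j,he_m)$ are obtained by inclusion–exclusion of squares in the chart $U_{s,N}$. This lets us recover $\Delta(fe_j+he_m)$ modulo $\Delta(e_s)$ and $\beta(e_s,\cdot)$ terms. Linearity of $\beta$ in each leg (over $k$) then reduces everything to monomials.

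\textbf{Step 2: reduce $\Delta(t^ne_j)$ and $\beta(t^ne_j,t^pe_{j'})$.} From the chart $U_{s,N}$ with $s\ne j$, inclusion–exclusion gives $\Delta(e_s)$ and $\beta(e_s,fe_j+he_m)$ for all $f,h\in\mathcal P_N$. It also gives $\Delta(fe_j+he_m)$ for all such $f,h$. With $f=t^n$, $h=0$ this yields $\Delta(t^ne_j)\in C_N$. With $f=t^n$, $h=t^p$ it yields $\beta(t^ne_j,t^pe_m)$ for $j\ne m$.

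\textbf{Step 3 (main obstacle): the same-index cross terms $\beta(t^ne_j,t^pe_j)$ with $n\ne p$.} For these, expand $\Delta(e_s+(t^n+t^p)e_j)$ in $U_{s,N}$. This gives $\Delta((t^n+t^p)e_j)$ modulo known terms, hence $\beta(t^ne_j,t^pe_j)=\Delta((t^n+t^p)e_j)-\Delta(t^ne_j)-\Delta(t^pe_j)$.

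Combining, every basis element of $C$ lies in $C_N$ for $N>\max$ of the degrees involved, so $\bigcup_N C_N=C$. The care needed is bookkeeping that each cancelled term genuinely lies in $C_N$. That holds because all vectors used have coefficient $1$ on $e_s$ and coefficients in $\mathcal P_N$ elsewhere.
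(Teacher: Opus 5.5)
\textbf{There is a genuine gap: the step that isolates the diagonal squares $\Delta(t^ne_j)$ rests on a false claim.}

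Several parts of your proposal are correct:
\begin{itemize}
\item the monotonicity argument;
\item the reduction to the monomial basis of $C$;
\item your four-square identity in Step~1, which gives $\beta(t^ne_j,e_m)$ from $e_s$, $e_s+t^ne_j$, $e_s+e_m$, $e_s+t^ne_j+e_m$;
\item the single-chart derivation of $\beta(t^ne_j,t^pe_m)$ for $j\neq m$ in Step~2.
\end{itemize}

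The false claim appears at the end of Step~1 and is then used in Steps~2 and~3. It says that inclusion--exclusion inside the single chart $U_{s,N}$ yields $\beta(e_s,fe_j)$ and $\Delta(fe_j)$ separately. Write $w=\sum_{b\in S}b$, where $S$ is a set of monomials of the form $t^ne_j$ or $t^ne_m$. Then
\[
\Delta(e_s+w)=\Delta(e_s)+\sum_{b\in S}\bigl(\Delta(b)+\beta(e_s,b)\bigr)+\sum_{\{b,b'\}\subset S}\beta(b,b').
\]
So in every element of $\operatorname{span}_k\Delta(U_{s,N})$, the basis vectors $\Delta(b)$ and $\beta(e_s,b)$ occur with the same coefficient. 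In particular $\Delta(t^ne_j)$ is not in that span, however you combine squares from $U_{s,N}$. You announce a ``second chart'' but never use one. The four vectors you list all lie in $U_{s,N}$, and they give a unit leg on $e_m$, not on $e_s$. Step~3 inherits the same problem, because its ``known terms'' include $\beta(e_s,t^ne_j)+\beta(e_s,t^pe_j)$.

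\textbf{The missing idea} is to take the unit-leg term $\beta(e_s,t^ne_j)$ from the other chart $U_{m,N}$, in which $e_s=1\cdot e_s$ is a free coordinate. Your own four-square identity with $s$ and $m$ interchanged gives
\[
\beta(e_s,t^ne_j)=\Delta(e_m+e_s+t^ne_j)+\Delta(e_m+t^ne_j)+\Delta(e_m+e_s)+\Delta(e_m).
\]
Then
\[
\Delta(t^ne_j)=\Delta(e_s+t^ne_j)+\Delta(e_s)+\beta(e_s,t^ne_j).
\]
These six squares, drawn from the two charts $U_{s,N}$ and $U_{m,N}$, are exactly the paper's six-square identity \eqref{eq:six-square} with $i=s$ and $w=t^ne_j$. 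That identity holds for any $f\in\mathcal P_N$ in place of $t^n$, so it also settles Step~3 directly through \eqref{eq:same-coordinate}.

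With this correction your argument becomes the paper's proof:
\begin{itemize}
\item the two-chart six-square identity for the diagonal terms;
\item the four-square identity \eqref{eq:cross-coordinate} in the third chart for the cross-coordinate terms;
\item \eqref{eq:same-coordinate} for the same-coordinate terms.
\end{itemize}
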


\begin{proof}
The space $C$ has the basis displayed in the proof of Lemma~\ref{lem:delta}. We obtain its members from chart squares. For pairwise distinct $i,j,m$ and $w=fe_j$, by expanding in characteristic two, we obtain
\begin{align}
 \Delta(w)={}&\Delta(e_i+w)+\Delta(e_i)
 +\Delta(e_m+w)+\Delta(e_m)\notag\\
 &+\Delta(e_i+e_m+w)+\Delta(e_i+e_m).
 \label{eq:six-square}
\end{align}
Every square on the right lies in a chart for sufficiently large $N$. For pairwise distinct $i,j,m$,
\begin{align}
 \beta(fe_i,he_j)={}&\Delta(e_m+fe_i+he_j)
 +\Delta(e_m+fe_i)\notag\\
 &+\Delta(e_m+he_j)+\Delta(e_m),
 \label{eq:cross-coordinate}
\end{align}
while
\begin{equation}
 \beta(fe_i,he_i)
 =\Delta((f+h)e_i)+\Delta(fe_i)+\Delta(he_i).
 \label{eq:same-coordinate}
\end{equation}
The squares in \eqref{eq:same-coordinate} lie in the union by \eqref{eq:six-square}. Taking $f$ and $h$ to be monomials produces every diagonal and symmetric cross member of the basis of $C$.
\end{proof}
Recall that a \textbf{Boolean polynomial} in $n$ variables over $k=\Z/2\Z$ is a polynomial function on $k^n$, or equivalently an element of $k[x_1,\ldots,x_n]/(x_1^2-x_1,\ldots,x_n^2-x_n)$. Every function $k^n\to k$ admits a unique square-free representative, called its algebraic normal form, and its degree is the largest number of distinct variables occurring in a monomial with nonzero coefficient. For example, $r_0(a_1,b_1,a_2,b_2)=a_1b_1+a_2b_2$ is a Boolean polynomial of degree two. The \textbf{Hamming weight} of a Boolean polynomial $P$ is the cardinality of its support, namely $$\operatorname{wt}(P):=|\operatorname{supp}(P)|=|\{x\in k^n\mid P(x)=1\}|.$$ For $v\in k^n$, the directional derivative of $P$ in the direction $v$ is the Boolean polynomial $\Delta_vP$ defined by $$(\Delta_vP)(x):=P(x+v)-P(x).$$ If $P$ has degree at most $r$, then $\Delta_vP$ has degree at most $r-1$.
\begin{lemma}\label{lem:boolean-weight}
Let $P:k^n\to k$ be a nonzero Boolean polynomial of algebraic degree at most two. Then
$$
 \operatorname{wt}(P)=|\{x\in k^n:P(x)=1\}|\geq2^{n-2}.
$$
\end{lemma}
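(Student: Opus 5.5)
We argue by induction on $n$, splitting $P$ along the last variable. This is the standard minimum-distance bound for the second-order Reed--Muller code. For $n\le 2$ the bound is at most $1$, so it holds because $P\neq0$ has nonempty support. For $n\geq3$, write
$$
 P(x',x_n)=P_0(x')+x_nP_1(x'),
 \qquad x'\in k^{n-1},
$$
where $P_0=P(\cdot,0)$ has degree at most two and $P_1=P(\cdot,1)-P(\cdot,0)$ has degree at most one. Then
$$
 \operatorname{wt}(P)=\operatorname{wt}(P_0)+\operatorname{wt}(P_0+P_1),
$$
and both restrictions $P_0$ and $P_0+P_1$ are Boolean polynomials of degree at most two in $n-1$ variables.

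If both restrictions are nonzero, the induction hypothesis gives $\operatorname{wt}(P)\geq 2\cdot2^{n-3}=2^{n-2}$, and we are done. Otherwise exactly one restriction vanishes, since they cannot both vanish when $P\neq0$. Then $P$ equals the other restriction, and that restriction is either $P_1$ (when $P_0=0$) or $P_1$ (when $P_0=P_1$), up to which half carries the support. So it suffices to know that the weight of $P_1$ on $k^{n-1}$ is at least $2^{n-2}$.

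This case is immediate because $P_1$ is a nonzero affine function. A nonconstant affine function on $k^{n-1}$ has weight exactly $2^{n-2}$. The constant function $1$ has weight $2^{n-1}$. Hence $\operatorname{wt}(P)\geq2^{n-2}$ in this case as well.

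The only point needing care is to organize the case split so that the weak inductive bound $2^{n-3}$ is used only when both halves are nonzero. The degenerate case then relies on the exact weight of affine functions, not on induction. The observation that makes this work is that $P_1$ has degree at most one, and affine functions have weight exactly half of the space.

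An alternative route, which I would keep as a fallback, is Dickson's theorem. It normalizes a quadratic Boolean function, after an affine change of variables, to $x_1x_2+\dots+x_{2h-1}x_{2h}+L$, where $L$ is affine. The weights of these forms are $2^{n-1}$ or $2^{n-1}\pm2^{n-1-h}$, and each is at least $2^{n-2}$ because $h\geq1$. The induction above avoids this classification, so I would present the induction.
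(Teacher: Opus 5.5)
Your proof is correct. It is the Plotkin-type $(u\,|\,u+v)$ recursion for second-order Reed--Muller codes. You split along a fixed coordinate $x_n$. When both halves $P_0$ and $P_0+P_1$ are nonzero, you apply the induction hypothesis to each. When one half vanishes, you use the exact weight of the nonzero affine function $P_1$. One phrasing fix: ``Then $P$ equals the other restriction'' should say that $\operatorname{wt}(P)$ equals the weight of the nonvanishing restriction, which is $P_1$ in either subcase.

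The paper avoids induction by choosing the splitting direction according to $P$. After treating the affine case, it picks $i$ such that $x_ix_j$ occurs in $P$. Then the derivative $\Delta_{e_i}P$, which is exactly your $P_1$ when one splits along $x_i$ instead of $x_n$, is a nonconstant affine function of weight $2^{n-1}$. Each of the $2^{n-2}$ pairs $\{x,x+e_i\}$ on which $\Delta_{e_i}P=1$ contributes exactly one point to the support, so $\operatorname{wt}(P)\ge 2^{n-2}$ in one step. In your notation, this is the inequality $\operatorname{wt}(P_0)+\operatorname{wt}(P_0+P_1)\ge\operatorname{wt}(P_1)$ applied to a well-chosen variable.

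The paper's route is shorter and needs no case analysis on which half vanishes. Yours works with an arbitrary fixed variable and never has to locate a quadratic monomial. The price is the induction, which handles a splitting variable that occurs in $P$ only linearly or not at all. Your Dickson-normal-form fallback is also valid, but heavier than either argument.
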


\begin{proof}
If $P$ is affine, it is either the constant one function or has weight $2^{n-1}$. Otherwise, $P$ is of algebraic degree 2. Choose $i\ne j$ such that the coefficient of $x_ix_j$ in the unique square-free representative of $P$ is one. Since
$$
\Delta_{e_i}(x_ix_j)
=(x_i+1)x_j-x_ix_j
=x_j,
$$
the coefficient of $x_j$ in $\Delta_{e_i}P$ is one. Hence
$$
\Delta_{e_i}P(x)=P(x+e_i)-P(x)
$$
is a nonconstant affine function, and therefore
$$
\bigl|\{x\in k^n\mid \Delta_{e_i}P(x)=1\}\bigr|=2^{n-1}.
$$
Moreover,
$$
\begin{aligned}
\Delta_{e_i}P(x+e_i)
&=P(x+2e_i)-P(x+e_i)\\
&=P(x)-P(x+e_i)\\
&=P(x+e_i)-P(x)\\
&=\Delta_{e_i}P(x).
\end{aligned}
$$
Here we used $\operatorname{char}(k)=2$. Thus $\Delta_{e_i}P$ is constant on every pair $\{x,x+e_i\}$. Since each pair has two elements, exactly $ \frac{2^{n-1}}{2}=2^{n-2} $ of these pairs satisfy $\Delta_{e_i}P=1$. On each such pair,
$$
P(x+e_i)-P(x)=1,
$$
and hence
$$
\{P(x),P(x+e_i)\}=\{0,1\}.
$$
Therefore exactly one endpoint of each of these $2^{n-2}$ pairs belongs to $\operatorname{supp}(P)$, so
$$
\operatorname{wt}(P)
=\sum_{\{x,x+e_i\}}
\bigl|\{x,x+e_i\}\cap\operatorname{supp}(P)\bigr|
\geq 2^{n-2}.
$$
\end{proof}

As in Section~\ref{section: iso vna}, for every countable discrete $k$-module $M$ we continue to identify its Pontryagin dual $\widehat M$ with $\Hom_k(M,k)$. Under this identification, the trivial character is the zero functional.

\begin{proposition}\label{prop:tensor-detector}
If $\mu$ is an $\SL_3(R)$-invariant Borel probability measure on $\widehat C$, then
\begin{equation}
 \mu(\widehat C\setminus\{0\})
 \leq12\,\mu\{\chi\in\widehat C\mid\chi(\Delta(e_1))=1\}.
 \label{eq:C-detector}
\end{equation}
If $\nu$ is an $\SL_3(R)$-invariant Borel probability measure on $\widehat A$, then
\begin{equation}
 \nu(\widehat A\setminus\{0\})
 \leq12\,\nu\{\psi\in\widehat A\mid\psi(e_1)=1\}.
 \label{eq:A-detector}
\end{equation}
\end{proposition}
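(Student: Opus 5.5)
Both inequalities will follow from one averaging argument: bound the probability that a random chart square detects $\chi$, then use invariance to move each such event to the single event $\{\chi(\Delta(e_1))=1\}$.

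\textbf{Setup.} Fix $\chi\in\widehat C\setminus\{0\}$. By Lemma~\ref{lem:chart-exhaustion}, $\bigcup_N C_N=C$, so for all large $N$ the restriction of $\chi$ to $C_N$ is nonzero. Hence $\chi(\Delta(v))=1$ for some $v\in U_{1,N}\cup U_{2,N}\cup U_{3,N}$.

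\textbf{Detection on a chart.} Fix such an $N$, a chart index $s$ with $\{s,j,m\}=\{1,2,3\}$, and identify $U_{s,N}$ with $k^{2N}$ via the coefficients of $f,h\in\mathcal P_N$. Consider $P_s(f,h)=\chi(\Delta(e_s+fe_j+he_m))$. Since $\Delta(a+b)=\Delta(a)+\Delta(b)+\beta(a,b)$ and $\beta$ is bilinear, expanding $f,h$ in monomials shows that $P_s$ is a Boolean polynomial of degree at most two. For the index $s$ whose chart detects $\chi$, $P_s$ is nonzero. Lemma~\ref{lem:boolean-weight} then gives that at least a quarter of the points of $U_{s,N}$ satisfy $\chi(\Delta(v))=1$.

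\textbf{Averaging.} Every $v=e_s+fe_j+he_m$ equals $g_ve_1$ for some $g_v\in\SL_3(R)$: permute coordinates using signed permutations, which are harmless in characteristic two and lie in $\EL_3(R)$, then apply elementary matrices. Therefore $\chi(\Delta(v))=(g_v^{-1}\cdot\chi)(\Delta(e_1))$ for the dual action. Set $E=\{\chi:\chi(\Delta(e_1))=1\}$ and $X_N=\{\chi:\chi|_{C_N}\neq0\}$. For each $\chi\in X_N$,
$$\tfrac14\le\frac{1}{|U_{s,N}|}\sum_{v\in U_{s,N}}\mathbf 1_{g_v E}(\chi)$$
for some $s$. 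Summing over the three charts bounds $\mathbf 1_{X_N}(\chi)$ by $4\sum_{s=1}^{3}|U_{s,N}|^{-1}\sum_{v\in U_{s,N}}\mathbf 1_{g_vE}(\chi)$. Integrating against $\mu$ and using invariance, $\mu(g_vE)=\mu(E)$, gives $\mu(X_N)\le 12\,\mu(E)$. The sets $X_N$ increase to $\widehat C\setminus\{0\}$ (Borel, since each depends on finitely many coordinates), so letting $N\to\infty$ yields \eqref{eq:C-detector}. The constant $12=3\cdot4$ matches, which confirms this is the intended route.

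\textbf{The $\widehat A$ case.} This runs identically and more easily. Fix $\psi\in\widehat A\setminus\{0\}$. The span of the chart vectors $e_s+fe_j+he_m$ exhausts $A$, since $fe_j=(e_s+fe_j)+e_s$. The function $(f,h)\mapsto\psi(e_s+fe_j+he_m)$ is affine, so once nonzero its weight is at least $2^{2N-1}\ge 2^{2N-2}$. The same averaging gives \eqref{eq:A-detector}.

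\textbf{Main obstacle.} The delicate point is the polynomial-degree claim. Using the bilinearity of $\beta$ to verify that $P_s$ has degree at most two in the $2N$ coefficient variables requires a careful expansion of $\Delta(\sum\epsilon_i b_i)$ as $\sum\epsilon_i\Delta(b_i)+\sum_{i<i'}\epsilon_i\epsilon_{i'}\beta(b_i,b_{i'})$. The other check needed is that each chart vector is an $\SL_3(R)$-translate of $e_1$, which holds because it is unimodular, having a coordinate equal to $1$.
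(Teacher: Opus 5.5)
Your proof is correct, and for \eqref{eq:C-detector} it is essentially the paper's argument: Lemma~\ref{lem:boolean-weight} applied on each chart, a lower bound $1/12$ for the three-chart average whenever $\chi$ does not annihilate $C_N$, transport of each chart square to $\Delta(e_1)$ by $\SL_3(R)$-invariance, and exhaustion of $C$ by the $C_N$ via Lemma~\ref{lem:chart-exhaustion}. The only difference is in \eqref{eq:A-detector}, where you rerun the averaging directly with affine chart functions (this even gives the constant $6$), whereas the paper deduces it from \eqref{eq:C-detector} by pushing $\nu$ forward along the injective equivariant map $\widehat\delta(\psi)=\psi\circ\delta$, which satisfies $\widehat\delta(\psi)(\Delta(e_1))=\psi(e_1)$; that route relies on Lemma~\ref{lem:delta} but avoids a second computation.
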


\begin{proof}
Let $\CP_N$ and $U_{s,N}$ be as in Lemma~\ref{lem:chart-exhaustion}. Then through the map
$$f=a_{N-1}t^{N-1}+...+a_0\in \CP_N\mapsto (a_0,a_1,...,a_{N-1})\in k^N,$$
we have $\CP_N\cong k^N$ as $k$-modules. We also have $U_{s,N}\cong \CP_N\oplus\CP_N\cong k^{2N}$.

For $\chi\in\widehat C=\Hom_k(C,k)$, the function
$$
 (f,h)\longmapsto
 \chi\bigl(\Delta(e_s+fe_j+he_m)\bigr)
$$
on the chart $U_{s,N}\cong k^{2N}$ is a Boolean polynomial of degree at most two in the $2N$ coefficients of $f$ and $h$. Define
$$
 B_N(\chi)=\frac{1}{3\cdot2^{2N}}
 \sum_{s=1}^3\left(\sum_{v\in U_{s,N}}
 \mathbf{1}_{\{\rho\in\widehat C\mid\rho(\Delta(v))=1\}}(\chi)\right).
$$
If $\chi$ does not annihilate $C_N=\operatorname{span}_k \{\Delta(U_{1,N}\cup U_{2,N}\cup U_{3,N})\}$, at least one of the three chart polynomials is nonzero. By Lemma~\ref{lem:boolean-weight}, $B_N(\chi)\geq1/12$.

Every $v\in U_{s,N}$ belongs to the $\SL_3(R)$-orbit of $e_1$: an even coordinate permutation first sends $e_1$ to $e_s$, and the elementary matrices
$$
 I+fE_{js},\qquad I+hE_{ms}
$$
then produce $e_s+fe_j+he_m$. By the identity $\Delta(lv)=l\Delta(v)$ and the invariance of $\mu$, we have
$$
 \int_{\widehat C}B_N(\chi)\,{\d}\mu(\chi)
 =\mu\{\chi\in\widehat C\mid\chi(\Delta(e_1))=1\}.
$$
Therefore,
$$
 \mu\{\chi\in\widehat C\mid\chi(\Delta(e_1))=1\}
 \geq\frac1{12}\mu(\widehat C\setminus\Ann(C_N)),
$$
where $\Ann(C_N)=\{\chi\in \widehat C\mid \chi|_{C_N}=0\}$. By Lemma~\ref{lem:chart-exhaustion}, $\Ann(C_N)\searrow\{0\}$. Therefore, continuity from above proves \eqref{eq:C-detector}.

Dualizing the $\SL_3(R)$-equivariant surjection
$$
 \delta:C\longrightarrow A
$$
gives the injective equivariant map
$$
 \widehat\delta:\widehat A\longrightarrow\widehat C,
 \qquad
 \widehat\delta(\psi)=\psi\circ\delta.
$$
It preserves nonzero elements and satisfies $\widehat\delta(\psi)(\Delta(e_1))=\psi(e_1)$. Applying \eqref{eq:C-detector} to $(\widehat\delta)_*\nu$ proves \eqref{eq:A-detector}.
\end{proof}

Choose a basis $\varphi_1,\ldots,\varphi_4$ of $V^*=(k^4)^*$. Restricted to $\SL_3(R)$, both $D_i$ have the same module structure
$$
 D_i=(A\otimes V^*)\oplus C\cong A^{\oplus4}\oplus C.
$$
Define detector elements
$$
 d_r=(e_1\otimes\varphi_r,0)\quad(1\leq r\leq4),
 \qquad
 d_5=(0,\Delta(e_1)).
$$

\begin{corollary}\label{cor:five-detector}
For every $\SL_3(R)$-invariant Borel probability measure $\mu$ on $\widehat{D_i}$,
\begin{equation}
 \mu(\widehat{D_i}\setminus\{0\})
 \leq12\sum_{r=1}^5
 \mu\{\chi\in\widehat{D_i}\mid\chi(d_r)=1\}.
 \label{eq:five-detector}
\end{equation}
\end{corollary}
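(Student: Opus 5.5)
The plan is to decompose $\widehat{D_i}$ into its coordinate factors as an $\SL_3(R)$-space and apply Proposition~\ref{prop:tensor-detector} to each factor separately. Restricted to $\SL_3(R)$, both actions $\theta_1$ and $\theta_2$ agree. Indeed, in \eqref{eq:theta2} with $q=1$ the extra term is $l\delta(c)\otimes\ell_1$, and $\ell_1=0$. Hence the dual action of $\SL_3(R)$ on $\widehat{D_i}=Z\times\widehat C$ is the product action. Using the basis $\varphi_1,\ldots,\varphi_4$ of $V^*$, we identify $A\otimes V^*\cong A^{\oplus 4}$ via $\sum_r a_r\otimes\varphi_r\leftrightarrow(a_1,\ldots,a_4)$. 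This identification is $\SL_3(R)$-equivariant, since $\SL_3(R)$ acts only on the $A$ factor. Dually, $\widehat{D_i}\cong\widehat A^{\,4}\times\widehat C$, written $\chi\leftrightarrow(\psi_1,\ldots,\psi_4,\eta)$ with $\psi_r(a)=\chi(a\otimes\varphi_r,0)$ and $\eta(c)=\chi(0,c)$. Each coordinate projection $\pi_r:\widehat{D_i}\to\widehat A$ ($1\le r\le4$) and $\pi_5:\widehat{D_i}\to\widehat C$ is continuous and $\SL_3(R)$-equivariant.

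The key steps are the following. First, $\chi=0$ if and only if all five components vanish, so
$$
\widehat{D_i}\setminus\{0\}=\bigcup_{r=1}^5\pi_r^{-1}(\text{nonzero}),
$$
and subadditivity gives $\mu(\widehat{D_i}\setminus\{0\})\le\sum_{r=1}^4(\pi_r)_*\mu(\widehat A\setminus\{0\})+(\pi_5)_*\mu(\widehat C\setminus\{0\})$. Second, each pushforward $(\pi_r)_*\mu$ is an $\SL_3(R)$-invariant Borel probability measure, because $\pi_r$ is equivariant and $\mu$ is invariant. Third, we apply \eqref{eq:A-detector} to $(\pi_r)_*\mu$ for $r\le4$ and \eqref{eq:C-detector} to $(\pi_5)_*\mu$. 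Finally, we translate back to $\widehat{D_i}$ using $\psi_r(e_1)=\chi(e_1\otimes\varphi_r,0)=\chi(d_r)$ and $\eta(\Delta(e_1))=\chi(d_5)$. This yields
$$
(\pi_r)_*\mu\{\psi:\psi(e_1)=1\}=\mu\{\chi:\chi(d_r)=1\},
$$
and the analogous identity for $r=5$. Summing the five bounds gives \eqref{eq:five-detector} with the same constant $12$.

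The only point needing care is the equivariance of the splitting $A\otimes V^*\cong A^{\oplus4}$ for the actual action, in particular under $\theta_2$. This is settled by noting that the $\SL_3(R)$-part of either action is $(l,1)\cdot(a\otimes\varphi)=la\otimes\varphi$ on the first summand and $l\cdot c$ on $C$. The sharing of the twist between the two actions is irrelevant here because only $\SL_3(R)$-invariance is assumed. Measurability is automatic, since the projections are continuous and the sets $\{\chi:\chi(d_r)=1\}$ are clopen.
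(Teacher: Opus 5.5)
Your proposal is correct and follows essentially the same route as the paper. Both arguments identify $\widehat{D_i}\cong\widehat A^{\,4}\times\widehat C$, push $\mu$ forward along the $\SL_3(R)$-equivariant coordinate projections, apply Proposition~\ref{prop:tensor-detector} to each marginal, and conclude with the union bound; your remark that $\ell_1=0$ (so $\theta_1$ and $\theta_2$ agree on $\SL_3(R)$) just makes explicit what the paper asserts without comment.
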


\begin{proof}
Under
$$
\widehat{D_i}\cong(\widehat A)^4\times\widehat C,
$$
write
$$
\chi=(\psi_1,\ldots,\psi_4,\eta)\in \widehat{D_i}.
$$
For $1\leq r\leq4$, let $\mu_r$ be the pushforward of $\mu$ under the projection $\chi\mapsto\psi_r$, and let $\mu_5$ be its pushforward under the projection $\chi\mapsto\eta$. Since these coordinate projections are $\SL_3(R)$-equivariant, every $\mu_r$ is $\SL_3(R)$-invariant. Set
$$
m_r=\mu_r(\widehat A\setminus\{0\})\quad(1\leq r\leq4),
\qquad
m_5=\mu_5(\widehat C\setminus\{0\}),
$$
and
$$
p_r=\mu\{\chi\in\widehat{D_i}\mid\chi(d_r)=1\}
\quad(1\leq r\leq5).
$$
By applying Proposition~\ref{prop:tensor-detector} to the corresponding marginal, we have
$$
m_r\leq12p_r
\quad(1\leq r\leq5).
$$
Moreover, $\chi\neq0$ if and only if at least one of its five coordinates is nonzero. Hence the union bound gives
$$
\mu(\widehat{D_i}\setminus\{0\})
\leq\sum_{r=1}^5m_r
\leq12\sum_{r=1}^5p_r,
$$
which is \eqref{eq:five-detector}. No independence assumption is used.
\end{proof}

\begin{proposition}\label{prop:relative-T}
For $i=1,2$, the pair
$$
 \left(
  D_i\rtimes\SL_3(R),D_i
 \right)
$$
has relative property~(T).
\end{proposition}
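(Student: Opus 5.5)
We follow Shalom's spectral-measure argument, with Corollary~\ref{cor:five-detector} supplying the key inequality. Set $G=D_i\rtimes\SL_3(R)$, where $D_i$ carries the $\SL_3(R)$-module structure (the same for $i=1,2$). Fix a finite generating set $S$ of $\SL_3(R)$; this exists since $\SL_3(R)=\EL_3(R)$ is finitely generated by Proposition~\ref{prop:SL3-T}, or more simply since Shalom's setup provides one. Consider the finite set $K=S\cup\{d_1,\dots,d_5\}\subset G$.

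Let $\pi$ be a unitary representation of $G$ on $\mathcal H$ with a $(K,\varepsilon)$-invariant unit vector $\xi$, so $\norm{\pi(g)\xi-\xi}<\varepsilon$ for all $g\in K$. Restrict $\pi$ to the abelian group $D_i$. By the spectral theorem (SNAG/Stone), there is a projection-valued measure $E$ on $\widehat{D_i}$ with $\pi(d)=\int\chi(d)\,\d E(\chi)$, where $\chi(d)\in\{\pm1\}$. Covariance gives $\pi(l)E(B)\pi(l)^{-1}=E(lB)$ for $l\in\SL_3(R)$, with the inverse-dual action on $\widehat{D_i}$.

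Define the probability measure $\mu_\xi(B)=\ip{E(B)\xi}{\xi}$. It is not exactly invariant, but it is almost invariant under $S$. Indeed, $|\mu_\xi(lB)-\mu_\xi(B)|\le 2\norm{\pi(l)\xi-\xi}<2\varepsilon$ uniformly in $B$. For the detectors we have
\[
\norm{\pi(d_r)\xi-\xi}^2=\int|\chi(d_r)-1|^2\,\d\mu_\xi=4\,\mu_\xi\{\chi(d_r)=1\},
\]
so $\mu_\xi\{\chi(d_r)=1\}<\varepsilon^2/4$.

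The main obstacle is that Corollary~\ref{cor:five-detector} is stated only for exactly invariant measures. We plan to argue by contradiction with a limiting procedure. Suppose that for every $n$ there is a representation $\pi_n$ with a $(K,1/n)$-invariant unit vector $\xi_n$ and no nonzero $D_i$-invariant vector, so that $E_n(\{0\})=0$. Then $\mu_n=\mu_{\xi_n}$ lives on $\widehat{D_i}\setminus\{0\}$, has total variation distance from $l_*\mu_n$ tending to $0$ for $l\in S$ (hence for all $l$), and satisfies $\mu_n\{\chi(d_r)=1\}\to0$.

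Passing to a weak$^*$ limit on the compact metrizable space $\widehat{D_i}$, we obtain an $\SL_3(R)$-invariant probability measure $\mu$. Since the sets $\{\chi(d_r)=1\}$ are clopen, $\mu\{\chi(d_r)=1\}=0$. Corollary~\ref{cor:five-detector} then forces $\mu=\delta_0$, so $\mu_n(W)\to1$ for every clopen neighbourhood $W$ of $0$.

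This alone is not a contradiction; mass concentrating near $0$ is exactly Shalom's ``escape'' issue. We handle it by applying the same argument to conditioned measures. Choose the clopen neighbourhood $W=\Ann(C_N\oplus\cdots)$ adapted to the exhaustion of Lemma~\ref{lem:chart-exhaustion}. Rather than the limit, we use the averaging inequality from the proof of Proposition~\ref{prop:tensor-detector} directly on $\mu_n$: the average $B_N$ involves only finitely many group elements, each a word of bounded length in $S$, so almost invariance gives
\[
\int B_N\,\d\mu_n\le \mu_n\{\chi(d_r)=1\}+O(\varepsilon_N),
\]
where $\varepsilon_N\to 0$ for fixed $N$. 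This yields $\mu_n(\widehat{D_i}\setminus\Ann(\text{level }N))\lesssim 12\sum_r p_r+o(1)$. That estimate does not stop mass from sitting in $\Ann(\text{level }N)\setminus\{0\}$, however.

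Instead we follow Shalom's approach and obtain a uniform, $N$-independent estimate using the unitary structure. Let $P_N=E(\Ann(\text{level }N))$, the projection onto vectors invariant under the finite subgroup generated by $C_N$ and the corresponding $A$-parts. Moving $\xi$ by $l\in\SL_3(R)$ maps level-$N$ charts into the $\SL_3(R)$-orbit of $e_1$, so the bound $\norm{\xi-P_N\xi}^2\le 12\sum_r\norm{\pi(d_r)\xi-\xi}^2/4\cdot C$ holds uniformly in $N$, provided it holds for exactly invariant vectors. To get exact invariance, we apply the argument to the projection-valued measure itself. The key point is that the inequality of Corollary~\ref{cor:five-detector}, applied to the measures $\ip{E(\cdot)\eta}{\eta}$ averaged over $\eta$ in an $\SL_3(R)$-invariant closed subspace, is available for the restriction of $\pi$ to $\mathcal H_0=E(\widehat{D_i}\setminus\{0\})\mathcal H$.

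If $\mathcal H_0$ had almost invariant vectors, then the measure $\mu(B)=\Phi(E(B)|_{\mathcal H_0})$, for an $\SL_3(R)$-invariant mean $\Phi$ (obtained as an ultralimit of the states $\ip{\,\cdot\,\xi_n}{\xi_n}$), would be a finitely additive invariant mean on Borel sets of $\widehat{D_i}\setminus\{0\}$ giving zero mass to the detector sets. The combinatorial proof of Proposition~\ref{prop:tensor-detector} uses only finite additivity at a fixed level $N$, so it applies to invariant means on the complement of $\Ann(\text{level }N)$. This reduces the problem to ruling out an invariant mean concentrated on arbitrarily small neighbourhoods of $0$ in $\widehat{D_i}\setminus\{0\}$. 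This is precisely Burger's and Shalom's criterion. We rule it out by noting that such a mean, pushed by the maps $\chi\mapsto$ (its restriction to a chart), still gives positive mass to nonzero chart polynomials on each $\Ann(\text{level }N)\setminus\Ann(\text{level }N+1)$ shell, where the weight bound of Lemma~\ref{lem:boolean-weight} applies again at level $N+1$. Hence the detector mass is at least $1/12$ of the total, a contradiction.

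Consequently $E(\{0\})\neq0$ whenever $\varepsilon$ is small, which gives a nonzero $D_i$-invariant vector. This last step, controlling mass escaping to $0$ via invariant means, is the expected obstacle.
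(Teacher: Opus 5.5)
Your argument has a genuine gap at the step you yourself flag as the obstacle.

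After passing to a finitely additive $\SL_3(R)$-invariant mean $m$ on Borel subsets of $\widehat{D_i}\setminus\{0\}$ with zero detector mass, the level-$N$ averaging argument gives $m(\widehat{D_i}\setminus\Ann(\text{level }N))=0$ for every $N$. Applying the weight bound of Lemma~\ref{lem:boolean-weight} to the shell $\Ann(\text{level }N)\setminus\Ann(\text{level }N+1)$ only shows that each shell has $m$-mass $0$. To conclude that ``the detector mass is at least $1/12$ of the total'', you must sum over the infinitely many shells whose union is $\widehat{D_i}\setminus\{0\}$. That step uses countable additivity, which a mean does not have. The shell estimates are perfectly consistent with a finitely additive mean that gives mass zero to every shell and mass one to every $\Ann(\text{level }N)\setminus\{0\}$, so no contradiction arises. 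Ruling out an \emph{invariant} mean of this kind is exactly the content of the Burger--Shalom criterion, and it needs a real argument that you do not supply. The same issue affects your ``uniform, $N$-independent estimate'', which you assert only ``provided it holds for exactly invariant vectors''.

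The missing idea is that exact invariance is available for free, because $\SL_3(R)$ has property~(T) (Proposition~\ref{prop:SL3-T}). Let $P_{\mathrm{fix}}$ be the projection onto the $\SL_3(R)$-fixed vectors. If $\xi_n$ are almost invariant unit vectors for $D_i\rtimes\SL_3(R)$, then $\norm{(1-P_{\mathrm{fix}})\xi_n}\to0$. This also works with your fixed $K$: a Kazhdan pair $(S,\varepsilon_0)$ gives $\norm{\xi-P_{\mathrm{fix}}\xi}\leq\varepsilon/\varepsilon_0$. Hence $\eta_n=P_{\mathrm{fix}}\xi_n/\norm{P_{\mathrm{fix}}\xi_n}$ is exactly $\SL_3(R)$-invariant and still almost $D_i$-invariant. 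Its spectral measure $\mu_n(B)=\ip{E(B)\eta_n}{\eta_n}$ is a countably additive, genuinely $\SL_3(R)$-invariant probability measure on $\widehat{D_i}$. Corollary~\ref{cor:five-detector} then applies directly and gives $\mu_n(\widehat{D_i}\setminus\{0\})\leq 12\sum_{r}\mu_n\{\chi\mid\chi(d_r)=1\}\to0$. So $\ip{E(\{0\})\eta_n}{\eta_n}\to1$ and the $D_i$-fixed subspace is nonzero.

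This is the paper's proof, and it needs no weak$^*$ limits, means, or escape-of-mass analysis. Your route, if completed, would prove relative property~(T) without using property~(T) of $\SL_3(R)$, in the spirit of Shalom's theorem for $(S^2\rtimes\SL_2(S),S^2)$. As written, however, it does not close.
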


\begin{proof}
Assume that a unitary representation $\pi:G_i=D_i\rtimes\SL_3(R)\to \mathcal{U}(K)$ has almost invariant unit vectors $\xi_n$. It remains to prove that the $D_i$-fixed subspace of $K$ is nonzero. Let $P_{\mathrm{fix}}$ be the orthogonal projection onto the $\SL_3(R)$-fixed subspace. Property~(T) of $\SL_3(R)$ implies
$$
 \norm{(1-P_{\mathrm{fix}})\xi_n}\longrightarrow0.
$$
Indeed, otherwise, after passing to a subsequence, there would exist $\delta_0>0$ such that
$$
\norm{(1-P_{\mathrm{fix}})\xi_n}\geq\delta_0
$$
for every $n$. Define
$$
\zeta_n=
\frac{(1-P_{\mathrm{fix}})\xi_n}
     {\norm{(1-P_{\mathrm{fix}})\xi_n}}
\in\operatorname{Ran}(1-P_{\mathrm{fix}}).
$$
Since $P_{\mathrm{fix}}$ commutes with $\pi(h)$ for every $h\in\SL_3(R)$, for every finite subset $F\subseteq\SL_3(R)$ we have
$$
\max_{h\in F}\norm{\pi(h)\zeta_n-\zeta_n}
\leq
\frac{1}{\delta_0}
\max_{h\in F}\norm{\pi(h)\xi_n-\xi_n}
\longrightarrow0.
$$
Thus $(\zeta_n)$ is a sequence of almost invariant unit vectors for the restriction of $\pi$ to $\operatorname{Ran}(1-P_{\mathrm{fix}})$, whereas
$$
\operatorname{Ran}(1-P_{\mathrm{fix}})^{\SL_3(R)}=\{0\}.
$$
This contradicts property~(T) of $\SL_3(R)$. Hence
$$
 \eta_n
 =\frac{P_{\mathrm{fix}}\xi_n}{\norm{P_{\mathrm{fix}}\xi_n}}
$$
is eventually defined, is $\SL_3(R)$-fixed, and satisfies $\norm{\eta_n-\xi_n}\to0$. For each fixed $d\in D_i$,
$$
 \norm{\pi(d)\eta_n-\eta_n}
 \leq2\norm{\eta_n-\xi_n}
 +\norm{\pi(d)\xi_n-\xi_n}\longrightarrow0.
$$

Since $G_i$ is countable, the closed invariant subspace generated by
$$
 \{\pi(g)\eta_n:g\in G_i,\ n\geq1\}
$$
is separable; restrict the unitary representation to it if necessary. Since $D_i$ is abelian and we have identified $\widehat{D_i}$ with $\mathrm{Hom}_k(D_i,k)$, the Stone--Naimark--Ambrose--Godement (SNAG) theorem \cite[Theorem~D.3.1]{BHV08} provides a projection-valued Borel measure $E$ on $\widehat{D_i}=\mathrm{Hom}_k(D_i,k)$ such that
$$
  \pi(d)
  =
  \int_{\widehat{D_i}} (-1)^{\chi(d)}\,{\d}E(\chi),
  \qquad d\in D_i.
$$
Thus, for every Borel set $B\subseteq \widehat{D_i}$, the operator $E(B)$ is the orthogonal projection onto the spectral subspace corresponding to $B$. In particular, $E(\{0\})$ is the orthogonal projection onto the $D_i$-fixed subspace.

Define a Borel probability measure $\mu_n$ on $\widehat{D_i}$ by
$$
\mu_n(B)=\langle E(B)\eta_n,\eta_n\rangle
$$
for every Borel set $B\subseteq\widehat{D_i}$. The covariance relation
$$
E(h\cdot B)=\pi(h)E(B)\pi(h)^*
$$
and the $\SL_3(R)$-invariance of $\eta_n$ imply that
$$
\begin{aligned}
\mu_n(h\cdot B)
&=\langle E(h\cdot B)\eta_n,\eta_n\rangle\\
&=\langle\pi(h)E(B)\pi(h)^*\eta_n,\eta_n\rangle\\
&=\langle E(B)\eta_n,\eta_n\rangle
=\mu_n(B)
\end{aligned}
$$
for every $h\in\SL_3(R)$. Thus $\mu_n$ is $\SL_3(R)$-invariant.

Each $d_r$ has order two, and therefore

$$
\begin{aligned}
\norm{\pi(d_r)\eta_n-\eta_n}^2
&=
\int_{\widehat{D_i}}
\left|(-1)^{\chi(d_r)}-1\right|^2\,{\d}\mu_n(\chi)\\
&=
4\mu_n\{\chi\in\widehat{D_i}\mid\chi(d_r)=1\}.
\end{aligned}
$$
Set
$$
p_{r,n}
=
\mu_n\{\chi\in\widehat{D_i}\mid\chi(d_r)=1\}.
$$
Since $(\eta_n)$ is almost invariant under $G_i$, we have
$$
p_{r,n}
=
\frac14\norm{\pi(d_r)\eta_n-\eta_n}^2
\longrightarrow0
$$
for every $1\leq r\leq5$. On the other hand, by Corollary~\ref{cor:five-detector}, we have
$$
\mu_n(\widehat{D_i}\setminus\{0\})
\leq
12\sum_{r=1}^5p_{r,n}\to 0.
$$
Hence
$$\langle E(\{0\})\eta_n,\eta_n\rangle=1- \mu_n(\widehat{D_i}\setminus\{0\})\to 1.$$
In particular, $E(\{0\})\not=0$ and the $D_i$-fixed subspace is nonzero.
\end{proof}

The following lemma is well known; see, for example, \cite[Proposition~1.3]{BR95}.
\begin{lemma}\label{lem:extension}
Let $G$ be a countable discrete group and let $N\triangleleft G$. If $(G,N)$ has relative property~(T) and $G/N$ has property~(T), then $G$ has property~(T).
\end{lemma}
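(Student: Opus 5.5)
We argue through unitary representations and Kazhdan pairs; no finite generation of $G$ is needed, only countability. Let $\pi:G\to\mathcal U(K)$ be a unitary representation with almost invariant unit vectors $\xi_n$. We must produce a nonzero $G$-invariant vector.

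\emph{Step 1: pass to the $N$-invariant subspace.} Let $K^N$ be the subspace of $N$-fixed vectors and $P$ the orthogonal projection onto it. Since $N$ is normal, $K^N$ is $G$-invariant, so $P$ commutes with $\pi(G)$. We claim $\norm{(1-P)\xi_n}\to0$. If not, after passing to a subsequence $\norm{(1-P)\xi_n}\geq\delta_0>0$. Then the normalized vectors $(1-P)\xi_n/\norm{(1-P)\xi_n}$ are almost invariant for the restriction of $\pi$ to $(K^N)^\perp$, exactly as in the argument in the proof of Proposition~\ref{prop:relative-T}. This restriction is a representation of $G$ with no nonzero $N$-fixed vectors, which contradicts relative property~(T) of $(G,N)$. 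Hence the vectors $\eta_n=P\xi_n/\norm{P\xi_n}$ are eventually defined. They lie in $K^N$ and satisfy $\norm{\eta_n-\xi_n}\to0$, so they are almost $G$-invariant unit vectors.

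\emph{Step 2: descend to the quotient.} The representation $\pi|_{K^N}$ factors through a unitary representation $\bar\pi$ of $G/N$. For a finite set $\bar F\subset G/N$, choose lifts $F\subset G$. Since $\bar\pi(gN)\eta_n=\pi(g)\eta_n$, the vectors $\eta_n$ are almost invariant for $\bar\pi$. By property~(T) of $G/N$, $K^N$ contains a nonzero $G/N$-invariant vector. That vector is then $G$-invariant, so $G$ has property~(T).

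The argument is routine. The only delicate point is Step 1, which upgrades relative property~(T), as stated with merely a nonzero $N$-fixed vector, to almost invariant vectors lying inside $K^N$. This works because $N$ is normal, so $K^N$ and its orthogonal complement are $G$-invariant and relative property~(T) can be applied to the complement. Alternatively, one may simply cite \cite[Proposition~1.3]{BR95}.
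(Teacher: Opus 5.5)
Your argument is correct and complete. Normality makes $K^N$ and $(K^N)^\perp$ invariant under $G$, so relative property~(T) applied to $(K^N)^\perp$ forces the almost invariant vectors into $K^N$, and property~(T) of $G/N$ then finishes. The paper gives no proof here and simply cites \cite[Proposition~1.3]{BR95}, so you have written out that standard fact; your Step~1 is the same projection argument the paper uses in the proof of Proposition~\ref{prop:relative-T}.
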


\begin{proposition}\label{prop:Gamma-T}
Both $\Gamma_1$ and $\Gamma_2$ have property~(T).
\end{proposition}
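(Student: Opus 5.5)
The plan is to apply Lemma~\ref{lem:extension} twice, building property~(T) of $\Gamma_i$ up from $G_i=D_i\rtimes\SL_3(R)$. Fix $i\in\{1,2\}$. Since $Q=\Sp_4(k)$ acts on $D_i$ through $\theta_i$ and commutes with $\SL_3(R)$ inside $H$, the subgroup $G_i=D_i\rtimes\SL_3(R)$ is normal in $\Gamma_i=D_i\rtimes_{\theta_i}H$. The quotient $\Gamma_i/G_i\cong Q$ is finite, hence has property~(T).

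First I show that $G_i$ itself has property~(T). Inside $G_i$ the subgroup $D_i$ is normal, and $G_i/D_i\cong\SL_3(R)$ has property~(T) by Proposition~\ref{prop:SL3-T}. The pair $(G_i,D_i)$ has relative property~(T) by Proposition~\ref{prop:relative-T}. Lemma~\ref{lem:extension} then gives property~(T) for $G_i$.

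Next I pass from $G_i$ to $\Gamma_i$. There are two equally easy routes.
\begin{itemize}
\item \emph{Finite index.} $G_i$ has finite index $|Q|$ in $\Gamma_i$, and property~(T) passes to finite-index overgroups, for instance via \cite{BHV08}: induce, or average almost invariant vectors over coset representatives.
\item \emph{Second extension.} Since $G_i$ has property~(T), the pair $(\Gamma_i,G_i)$ has relative property~(T) trivially. Indeed, a representation of $\Gamma_i$ with almost invariant vectors restricts to one of $G_i$ with almost invariant vectors, hence has nonzero $G_i$-invariant vectors. As $\Gamma_i/G_i\cong Q$ has property~(T), Lemma~\ref{lem:extension} applies again.
\end{itemize}
I will use the second route, since it needs only results already in the paper.

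There is no real obstacle here: all the analytic work is contained in Propositions~\ref{prop:SL3-T} and~\ref{prop:relative-T}. The only point to verify is that $G_i$ is normal in $\Gamma_i$ with quotient $Q$. This follows from $H=\SL_3(R)\times Q$: the projection $\Gamma_i\to H\to Q$ is a homomorphism with kernel exactly $D_i\rtimes\SL_3(R)$.
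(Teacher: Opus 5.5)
Your proposal is correct and follows the paper's proof essentially verbatim. The paper likewise applies Lemma~\ref{lem:extension} first to $D_i\triangleleft D_i\rtimes\SL_3(R)$, using Propositions~\ref{prop:relative-T} and~\ref{prop:SL3-T}, and then a second time to $D_i\rtimes\SL_3(R)\triangleleft\Gamma_i$: it obtains relative property~(T) of $(\Gamma_i,D_i\rtimes\SL_3(R))$ from property~(T) of the subgroup by restricting almost invariant vectors, and uses that the quotient $Q$ is finite (your explicit check that $D_i\rtimes\SL_3(R)$ is the kernel of $\Gamma_i\to H\to Q$ fills in a point the paper only asserts).
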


\begin{proof}
Apply Lemma~\ref{lem:extension} to
$$
 D_i\triangleleft D_i\rtimes\SL_3(R).
$$
Proposition~\ref{prop:relative-T} supplies relative property~(T), and the quotient is $\SL_3(R)$, which has property~(T) by Proposition~\ref{prop:SL3-T}. Hence $D_i\rtimes\SL_3(R)$ has property~(T). It is normal of finite index in $\Gamma_i$, and
$$
 \Gamma_i/\bigl(D_i\rtimes\SL_3(R)\bigr)\cong Q.
$$
Property~(T) of $D_i\rtimes\SL_3(R)$ implies relative property~(T) for $
 \bigl(\Gamma_i,D_i\rtimes\SL_3(R)\bigr),
$ because a $\Gamma_i$-almost-invariant sequence is almost invariant for the subgroup. The finite group $Q$ has property~(T), so a second application of Lemma~\ref{lem:extension} proves the result.
\end{proof}

\section{The ICC property}

\begin{lemma}\label{lem:linear-icc}
The group $\SL_3(R)$ is ICC.
\end{lemma}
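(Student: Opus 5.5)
We need to show every nontrivial $g\in\SL_3(R)$ has an infinite conjugacy class. First, the center is trivial: a central element commutes with all elementary matrices $I+E_{ij}$, hence is scalar $\lambda I$ with $\lambda^3=1$. Since the only unit of $R=k[t]$ is $1$, the center is $\{I\}$. So fix $g\neq I$. We will exhibit infinitely many distinct conjugates, using conjugation by the elementary matrices $e_{ij}(f)=I+fE_{ij}$ with $f$ ranging over $R$.

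The key computation is as follows. For $u=e_{ij}(f)$ we have $u^{-1}=e_{ij}(f)$ in characteristic two, and
$$
 ugu^{-1}=(I+fE_{ij})\,g\,(I+fE_{ij})
 =g+f(E_{ij}g+gE_{ij})+f^2E_{ij}gE_{ij}.
$$
Here $E_{ij}gE_{ij}=g_{ji}E_{ij}$. Thus the $(i,j)$-entry of $ugu^{-1}$ is
$$
 g_{ij}+f(g_{jj}+g_{ii})+f^2g_{ji}.
$$
If either $g_{ji}\neq0$ or $g_{ii}\neq g_{jj}$, this entry is a nonconstant polynomial expression in $f$. As $f$ runs over the infinitely many elements $t^n$, it takes infinitely many values, because a nonzero polynomial of the form $af+bf^2$ has degree growing with $\deg f$. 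This gives infinitely many distinct conjugates.

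It remains to handle $g$ for which, for every ordered pair $i\neq j$, we have $g_{ji}=0$ and $g_{ii}=g_{jj}$. Ranging over all ordered pairs forces every off-diagonal entry to vanish and all diagonal entries to be equal, so $g$ is scalar. By the first step, $g=I$, a contradiction. Hence every nontrivial $g$ falls into the good case for some pair $(i,j)$.

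The main point needing care is the distinctness argument. One should choose $f=t^n$ and check that $b t^n+a t^{2n}$, with $(a,b)\neq(0,0)$ fixed, takes pairwise distinct values for distinct large $n$; comparing degrees makes this immediate. No other obstacle is expected, since $\SL_3(R)$ is closed under conjugation by $\EL_3(R)$.
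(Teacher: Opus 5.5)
Your argument is correct, but it runs differently from the paper's. The paper uses an index count. If $g$ has a finite conjugacy class, its centralizer has finite index, so it meets each infinite root subgroup $U_{rs}=\{I+fE_{rs}:f\in R\}$ nontrivially. A single nonzero $f$ with $I+fE_{rs}$ commuting with $g$ gives $f(gE_{rs}-E_{rs}g)=0$, hence $gE_{rs}=E_{rs}g$ by the domain property. Thus $g$ is scalar, hence equal to $I$.

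You instead compute the conjugates $e_{ij}(f)\,g\,e_{ij}(f)$ explicitly and follow the single entry $g_{ij}+f(g_{ii}+g_{jj})+f^2g_{ji}$. Degree growth along $f=t^n$ then gives infinitely many distinct conjugates unless $g_{ji}=0$ and $g_{ii}=g_{jj}$, and imposing this for every ordered pair forces $g$ to be scalar.

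Your version is constructive: it names an explicit infinite family of conjugates, and it needs only one matrix entry per ordered pair rather than full commutation with $E_{rs}$. The paper's version avoids computing conjugates and any degree bookkeeping. Its finite-index-stabilizer template is also exactly what the paper reuses in Lemma~\ref{lem:D-infinite-orbits}.

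Neither argument depends essentially on characteristic two. In yours, $u^{-1}=u$ is only a convenience. Over any infinite domain, a nonconstant polynomial of degree at most two in $f$ takes each value at most twice, so you could even drop the degree comparison.
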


\begin{proof}
Suppose $g\in\SL_3(R)$ has finite conjugacy class. Then the centralizer $C_{\SL_3(R)}(g)$ has finite index. Fix $r,s\in\{1,2,3\}$ with $r\ne s$, define
$$
 U_{rs}=\{I+fE_{rs}:f\in R\},
$$
which is a subgroup of $\SL_3(R)$. Since $C_{\SL_3(R)}(g)$ has finite index in $\SL_3(R)$, we have
$$
[U_{rs}:U_{rs}\cap C_{\SL_3(R)}(g)]
\leq
[\SL_3(R):C_{\SL_3(R)}(g)]
<\infty.
$$
Since $U_{rs}\cong(R,+)$ is infinite, the intersection $U_{rs}\cap C_{\SL_3(R)}(g)$ is nontrivial. Hence it contains $I+fE_{rs}$ for some nonzero $f\in R$. By commutation and the domain property, $gE_{rs}=E_{rs}g$. Thus $g$ commutes with every off-diagonal matrix unit and is scalar. The only unit of $R$ is $1$, so $g=I$.
\end{proof}

\begin{lemma}\label{lem:D-infinite-orbits}
Every nonzero element of $D_i$ has an infinite $\SL_3(R)$-orbit.
\end{lemma}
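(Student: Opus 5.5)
Restricted to $\SL_3(R)$, the two actions $\theta_1$ and $\theta_2$ coincide: $\theta_2(l,1)(u,c)=(l\cdot u,l\cdot c)$ because $\ell_1=0$. So it suffices to treat the single $\SL_3(R)$-module $D=(A\otimes V^*)\oplus C\cong A^{\oplus4}\oplus C$. My plan is to reduce to a statement about $\SL_3(R)$-invariant probability measures on $\widehat D$ and then apply Corollary~\ref{cor:five-detector} in a dual form. An equally natural alternative is a direct algebraic argument, and I would keep it as a fallback.

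\textbf{Step 1: reduce to the detectors.} Suppose $d\neq0$ has a finite $\SL_3(R)$-orbit $O$. The stabilizer of $d$ then has finite index, so it contains some subgroup of finite index. I would rather argue with measures. Take the finite set $O\subset D$ and consider the representation of $G=D\rtimes\SL_3(R)$, or more simply the following dual picture. Let $\mu$ be the uniform probability measure on the finite $\SL_3(R)$-invariant set $O$, viewed inside the Pontryagin dual of $\widehat D$. That viewpoint is awkward, so instead I will go directly: consider the permutation representation of $\SL_3(R)$ on $\ell^2(O)$, which is finite dimensional.

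\textbf{Step 2 (algebraic, cleaner): unipotent root subgroups.} Write $d=(u,c)$ with $u\in A^{\oplus4}$ given by coordinates $a_1,\dots,a_4\in A$. If the orbit is finite, then for every root subgroup $U_{rs}=\{I+fE_{rs}\}$, the map $f\mapsto (I+fE_{rs})d$ from the infinite group $(R,+)$ has finite image. Hence there is a finite-index subgroup $W\subseteq R$, in fact an infinite subgroup suffices, such that $(I+fE_{rs})d=d$ for all $f\in W$.
\begin{itemize}
\item[(a)] On an $A$-coordinate $a=(x_1,x_2,x_3)$, this gives $f x_s e_r=0$ for some nonzero $f$. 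Since $R$ is a domain, $x_s=0$ for every $s$ (choose any $r\ne s$). So all $a_i=0$.
\item[(b)] On $C\subset A\otimes A$, with $g=I+fE_{rs}$, I would expand $g\otimes g=1+f(E_{rs}\otimes1+1\otimes E_{rs})+f^2E_{rs}\otimes E_{rs}$. Write $c=\sum_{m,n}c_{mn}\,e_m\otimes e_n$ with $c_{mn}\in R\otimes_kR\cong k[t_1,t_2]$, a domain. The $e_r\otimes e_r$ component of $gc-c$ is $f(t_1)c_{sr}+f(t_2)c_{rs}+f(t_1)f(t_2)c_{ss}$, and the $e_r\otimes e_n$ component ($n\ne r$) is $f(t_1)c_{sn}$ (plus possibly a term from the $f^2$ part when $n=s$).
\end{itemize}
From the mixed components, for an infinite set of $f$, I get $c_{sn}=0$ whenever $n\neq r$. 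Varying $r$ over the two indices different from $s$ gives $c_{sn}=0$ for all $n$, in particular $c_{ss}=0$. Hence $c=0$, and therefore $d=0$.

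The main obstacle is Step 2(b): the bookkeeping in $R\otimes R$, and the point that a single nonzero $f$ gives an identity that is polynomial in $f$. To handle the latter, I use that the identity holds for two values $f$, $f'$, and also for $f+f'$ when $W$ is a subgroup. Subtracting separates the linear and quadratic terms, since $(f+f')\otimes(f+f')-f\otimes f-f'\otimes f'=f\otimes f'+f'\otimes f$, which is nonzero for suitably chosen $f,f'\in W$ (for instance, linearly independent ones, which exist because $W$ is infinite). Steps 1 and 2 are then assembled as in Lemma~\ref{lem:linear-icc}.
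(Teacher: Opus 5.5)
Your Step~2 is a correct proof and follows essentially the paper's route: a finite orbit forces some $I+fE_{rs}$ with $f\neq 0$ to fix $d$ for each $r\neq s$, and the domain property of $R$ and of $R\otimes_k R\cong k[t_1,t_2]$ then kills every coordinate. Your use of the components $e_r\otimes e_n$ with $n\neq r$, which equal exactly $f(t_1)c_{sn}$, is slightly cleaner than the paper's degree comparison on the $(r,r)$ entry, because a single nonzero $f$ suffices.

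Step~1, the measure-theoretic plan and the polarization trick are unnecessary and can be deleted. The hedge about an extra $f^2$ term when $n=s$ is also unfounded, since the term $f(t_1)f(t_2)c_{ss}$ appears only in the $e_r\otimes e_r$ component.
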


\begin{proof}
First suppose $a\in A=R^3$ has finite orbit. With the same argument as in the proof of Lemma~\ref{lem:linear-icc}, its stabilizer has finite index, and contains $I+fE_{rs}$ for some nonzero $f$ for each $r,s\in\{1,2,3\}$ with $r\ne s$. The identity $(I+fE_{rs})a=a$ gives $fa_s=0$, hence $a_s=0$. Varying $s$ gives $a=0$.

For $C=(A\otimes A)^\mathrm{Flip}$, identify
$$
 A\otimes A=k[t]^3\otimes k[t]^3
 \cong \mathrm{M}_3(k[t_1,t_2])
$$
by
$$
 (f(t)e_r)\otimes(g(t)e_s)
 \longmapsto f(t_1)g(t_2)E_{rs}.
$$
The diagonal action of $u_f=I+fE_{rs}$ is
$$
 P\longmapsto
 (I+f(t_1)E_{rs})P(I+f(t_2)E_{sr}).
$$
If $P=(P_{rs})_{3\times 3}\in \mathrm{M}_3(k[t_1,t_2])$ has finite orbit, with the same argument above, its stabilizer meets $U_{rs}$ in a finite-index additive subgroup and therefore contains parameters $f$ of arbitrarily high
degree. Hence we can choose $f$ with $u_f\cdot P=P$ and $$d=\deg(f)>\max \{\deg(P_{rs})\mid 1\leq r,s \leq 3\},$$
where $\deg(P_{rs})$ denotes the total $(t_1,t_2)$-degree of $P_{rs}$. By the invariance $u_f \cdot P=P$, we have
\begin{equation}
 f(t_1)E_{rs}P+f(t_2)PE_{sr}
 +f(t_1)f(t_2)E_{rs}PE_{sr}=0.
 \label{eq:C-orbit}
\end{equation}
The $(r,r)$-entry of \eqref{eq:C-orbit} is
\begin{equation}\label{eq:C-orbit-rr}
 f(t_1)P_{sr}+f(t_2)P_{rs}
 +f(t_1)f(t_2)P_{ss}=0.
\end{equation}
If $P_{ss}\ne0$, then by our choice of $f$, the total $(t_1,t_2)$-degrees satisfy
$$\deg(f(t_1)f(t_2)P_{ss})\geq 2d>\deg(f(t_1)P_{sr}+f(t_2)P_{rs}).$$
This contradicts \eqref{eq:C-orbit-rr}. Hence $P_{ss}=0$. Since
$$
 E_{rs}PE_{sr}=P_{ss}E_{rr},
$$
the term $f(t_1)f(t_2)E_{rs}PE_{sr}$ in \eqref{eq:C-orbit} vanishes, and that equation reduces to
$$
 f(t_1)E_{rs}P+f(t_2)PE_{sr}=0.
$$
If $E_{rs}P\ne0$, by our choice of $f$, the first term has $t_1$-degree strictly larger than the second term, which is impossible. Thus $E_{rs}P=0$, and then $PE_{sr}=0$. Hence row $s$ and column $s$ vanish. By varying $s$, we obtain $P=0$. Therefore every nonzero element of $C$ has infinite orbit.

Finally, since $\SL_3(R)$ acts trivially on $V^*$, its action on $A\otimes V^*$ is the direct sum of four copies of its action on $A$. The $\SL_3(R)$-action on $D_i=(A\otimes V^*)\oplus C$ is independent of $i$. Every nonzero element of $D_i$ has a nonzero component in either one of the four copies of $A$ or in $C$. Since every such nonzero component has an infinite orbit, so does the original element.
\end{proof}

\begin{proposition}\label{prop:Gamma-icc}
Both $\Gamma_1$ and $\Gamma_2$ are ICC.
\end{proposition}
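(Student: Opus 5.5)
We show that every nontrivial element $g=(d,h)\in\Gamma_i=D_i\rtimes H$, with $d\in D_i$ and $h=(l,q)\in H$, has infinitely many conjugates. We split into the cases $h=1$ and $h\neq 1$, and within the second case according to the $\SL_3(R)$-component $l$.

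\emph{Case $h=1$, $d\neq0$.} Conjugating $d$ by $(0,(l',1))$ gives $\theta_i(l',1)d$. On this subgroup both actions $\theta_1$ and $\theta_2$ agree with the common $\SL_3(R)$-module structure. By Lemma~\ref{lem:D-infinite-orbits}, the $\SL_3(R)$-orbit of $d$ is infinite, so the conjugacy class of $g$ is infinite.

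\emph{Case $h\neq1$.} Conjugation is compatible with the quotient map $\Gamma_i\to H$, so the conjugacy class of $g$ maps onto the $H$-conjugacy class of $h$. Since $Q$ is finite and $H=\SL_3(R)\times Q$, the $H$-class of $(l,q)$ contains $\{(xlx^{-1},q):x\in\SL_3(R)\}$. If $l\neq1$, this set is infinite by Lemma~\ref{lem:linear-icc}, and we are done.

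The remaining subcase is $l=1$ and $q\neq1$, so $g=(d,(1,q))$. Here we conjugate by elements $e\in D_i$. A direct computation gives
$$
(e,1)\,(d,h)\,(e,1)^{-1}=\bigl(d+e-\theta_i(h)e,\;h\bigr).
$$
It therefore suffices to show that the image of $1-\theta_i(1,q)$ on $D$ is infinite. Restrict to elements $e=(a\otimes\varphi,0)$, on which $\theta_1$ and $\theta_2$ agree because the twist term $l\delta(c)\otimes\ell_q$ vanishes when $c=0$. Then
$$
(1-\theta_i(1,q))(a\otimes\varphi,0)=\bigl(a\otimes(\varphi-q\cdot\varphi),\,0\bigr).
$$
Since $q\neq1$ acts nontrivially on $V^*$ (the contragredient of a faithful representation is faithful), we can choose $\varphi$ with $\varphi-q\cdot\varphi\neq0$. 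Letting $a$ range over the infinite set $A$ then produces infinitely many distinct elements $a\otimes(\varphi-q\varphi)$, hence infinitely many conjugates of $g$.

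The main care point is the last subcase. One must check that the $\theta_2$ twist does not interfere, which holds because the twist involves only the $C$-component and we take $c=0$. One must also get the conjugation formula and its sign conventions right.
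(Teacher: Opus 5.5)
Your proposal is correct and follows essentially the same route as the paper's proof. It uses the same three cases: $l\neq 1$ via the ICC property of $\SL_3(R)$; $h=1$ via Lemma~\ref{lem:D-infinite-orbits}; and $l=1\neq q$ via conjugation by $(a\otimes\varphi,0)$ with $(1-q)\varphi\neq 0$, where the $\theta_2$-twist vanishes because the $C$-component is zero. One small note: invoking the finiteness of $Q$ in the case $l\neq 1$ is unnecessary, since conjugating by $(0,(x,1))$ with $x\in\SL_3(R)$ already leaves the $Q$-component unchanged.
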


\begin{proof}
Let
$$
 \gamma=(d;(l,q))\in D_i\rtimes(\SL_3(R)\times Q)
$$
be nontrivial. If $l\ne I$, since $\SL_3(R)$ is ICC by Lemma~\ref{lem:linear-icc}, the projection of its conjugacy class to $\SL_3(R)$ contains the infinite conjugacy class of $l$. If $l=q=I$, then $d\ne0$ and conjugation by $\SL_3(R)$ produces the infinite orbit of Lemma~\ref{lem:D-infinite-orbits}.

It remains to take $l=I$ and $q\ne I$. The natural action of $Q=\Sp_4(k)$ on $V^*=(k^4)^*$ is faithful, so choose $\varphi\in V^*$ with
$$
 \psi=(1-q)\varphi\ne0.
$$
For $a\in A$, let $x_a=(a\otimes\varphi,0)\in D_i=(A\otimes V^*)\oplus C$. The $C$-coordinate is zero, so in both actions
$$
 (1-\theta_i(1,q))x_a=(a\otimes\psi,0).
$$
Write $\gamma=(d;(1,q))$. Using the multiplication rule in the semidirect product, conjugation by $(x_a;1)$ gives
$$
\begin{aligned}
(x_a;1)\gamma(x_a;1)^{-1}
 &=(x_a;1)(d;(1,q))(-x_a;1)\\
 &=\bigl(x_a+d;(1,q)\bigr)(-x_a;1)\\
 &=\bigl(x_a+d+\theta_i(1,q)(-x_a);(1,q)\bigr)\\
 &=\bigl(d+x_a-\theta_i(1,q)x_a;(1,q)\bigr)\\
 &=\bigl(d+(a\otimes\psi,0);(1,q)\bigr).
\end{aligned}
$$
Hence different values of $a$ give different conjugates. Since $A=R^3$ is infinite, the family
$$
\left\{
(x_a;1)\gamma(x_a;1)^{-1}
\ \middle|\
a\in A
\right\}
$$
is infinite. Therefore the conjugacy class of $\gamma$ is infinite.
\end{proof}

\section{The groups are not isomorphic}

\begin{lemma}\label{lem:linear-no-abelian-normal}
The group $\SL_3(R)$ has no nontrivial abelian normal subgroup.
\end{lemma}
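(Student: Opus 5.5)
Let $N\triangleleft\SL_3(R)$ be an abelian normal subgroup. I will reduce to the ICC-type argument of Lemma~\ref{lem:linear-icc}: show first that $N$ commutes with a nontrivial element of each root subgroup $U_{rs}$, and then conclude that $N$ consists of scalars, hence $N=\{I\}$ because the only unit of $R$ is $1$.

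Fix $g\in N$ and $r\ne s$. For $u_f=I+fE_{rs}$, normality gives $u_fgu_f^{-1}\in N$, and this commutes with $g$. A cleaner route is to look at the commutator $[u_f,g]=u_f\,(gu_f^{-1}g^{-1})$. It lies in $N$, since $u_f g u_f^{-1}\in N$ and $g^{-1}\in N$. Two elements of $N$ commute with each other. Then:
\begin{enumerate}
\item Take $h=[u_f,g]\in N$. Since $N$ is abelian, $h$ commutes with $g$, and also with $u_fgu_f^{-1}$.
\item Rather than chase commutator identities, use the standard fact that a normal abelian subgroup of $\SL_3$ over a domain is central. Concretely, pass to the fraction field $K=k(t)$ and consider the Zariski closure of $N$ in $\SL_3(K)$. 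It is an abelian algebraic subgroup normalized by the Zariski closure of $\SL_3(R)$.
\item $\SL_3(R)$ is Zariski dense in $\SL_3(K)$, because it contains the root subgroups $U_{rs}(R)$, which are dense in $U_{rs}(K)$ since $R$ is infinite. So the closure $\overline N$ is a normal abelian subgroup of the simple-modulo-center group $\SL_3$. Its identity component is a connected normal abelian subgroup, hence trivial. Thus $\overline N$ is finite and normal in a connected group, hence central.
\item Therefore $N$ consists of scalar matrices $\lambda I$ with $\lambda^3=1$ and $\lambda\in R$. Since $R^\times=\{1\}$, this forces $N=\{I\}$.
\end{enumerate}

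To stay elementary, matching the style of the paper, I would instead argue via finite orbits. Suppose $g\in N$ is nontrivial. By Lemma~\ref{lem:linear-icc}, the conjugacy class $\{xgx^{-1}\}$ is infinite and contained in $N$. Consider the map $f\mapsto [u_f,g]\in N$. The elements $u_f g u_f^{-1}$ pairwise commute, so for $f,f'\in R$,
\[
u_f g u_f^{-1}\,u_{f'} g u_{f'}^{-1}=u_{f'} g u_{f'}^{-1}\,u_f g u_f^{-1}.
\]
Writing $g$ as a matrix and expanding in $f,f'$ gives polynomial identities in $f,f'$. Choosing $f=\lambda t^n$ for large $n$ and comparing the top-degree coefficients, as in Lemma~\ref{lem:D-infinite-orbits}, should force $E_{rs}g=gE_{rs}$-type relations. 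Running over all $r\ne s$, $g$ is scalar, hence $g=I$.

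The main obstacle is this degree-comparison step: extracting $gE_{rs}=E_{rs}g$ from the commutation of conjugates. I would therefore present the Zariski-density argument as the primary proof, and keep the elementary route only as a backup.
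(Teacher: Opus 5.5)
Your primary (Zariski-density) argument is correct, but it takes a genuinely different route from the paper's. You take the Zariski closure $\overline N$ and use the density of $\SL_3(R)$ to make $\overline N$ normal; density holds because the infinite root subgroups $U_{rs}(R)$ are dense in the lines $U_{rs}$, which generate $\SL_3$. You then invoke three structural facts: closures of abelian subgroups are abelian, a connected normal abelian subgroup of the almost simple group $\SL_3$ is trivial, and a finite normal subgroup of a connected group is central. Finally, $R^\times=\{1\}$ removes the cube roots of unity. (You should take the closures in $\SL_3(\bar K)$, where this structure theory applies directly, rather than in $\SL_3(K)$; this is cosmetic.)

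The paper instead computes directly. Take $b\in B$ and the involution $u_f=I+fE_{rs}$ (using characteristic two). Commutation of $b$ with the single conjugate $u_fbu_f$ expands to $fX+f^2Y=0$ for all $f\in R$, where $X=b^2E_{rs}+E_{rs}b^2$ and $Y=bE_{rs}bE_{rs}+E_{rs}bE_{rs}b$. The choices $f=1,t$ give $(t^2+t)Y=0$, hence $Y=0$. Since $E_{rs}bE_{rs}=b_{sr}E_{rs}$, this reads $b_{sr}(bE_{rs}+E_{rs}b)=0$, and the $(s,s)$-entry forces $b_{sr}=0$. So $b$ is diagonal with determinant one, hence $b=I$.

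Your route buys generality: normal abelian subgroups of $\SL_n(R)$ are central for any infinite domain $R$ and any $n\geq2$. The price is several black boxes from algebraic group theory. The paper's argument is a few self-contained lines and matches the elementary style used elsewhere.

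Your elementary backup was in fact nearly the paper's proof. Setting $f'=0$ in your commutation identity gives exactly $fX+f^2Y=0$. Evaluating at two values of $f$, instead of comparing top degrees, removes the obstacle you flagged.
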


\begin{proof}
Let $B\triangleleft\SL_3(R)$ be abelian and let $b\in B$. Fix $r,s\in \{1,2,3\}$ with $r\ne s$, and define $u_f=I+fE_{rs}$. Since $u_f^{-1}=u_f$ in characteristic two, normality and commutativity imply
$$
 b(u_fbu_f)=(u_fbu_f)b.
$$
Expanding gives
\begin{equation}
 fX+f^2Y=0\qquad(f\in R),
 \label{eq:normal-expansion}
\end{equation}
where
$$
 X=b^2E_{rs}+E_{rs}b^2,
 \qquad
 Y=bE_{rs}bE_{rs}+E_{rs}bE_{rs}b.
$$
Taking $f=1$ and $f=t$ in \eqref{eq:normal-expansion}, we obtain $$(t^2+t)Y=t(X+Y)+(tX+t^2Y)=0.$$ Hence $Y=0$ and then $X=0$.

Since $E_{rs}bE_{rs}=b_{sr}E_{rs}$, the identity $Y=0$ becomes
$$
 b_{sr}(bE_{rs}+E_{rs}b)=0.
$$
If $b_{sr}\ne0$, the domain property gives $bE_{rs}+E_{rs}b=0$, but its $(s,s)$-entry is $b_{sr}$, a contradiction. Hence every off-diagonal entry of $b$ is zero. Write $ b=\operatorname{diag}(\lambda_1,\lambda_2,\lambda_3). $ Since $b\in\SL_3(R)$, $ \lambda_1\lambda_2\lambda_3=\det b=1. $ Thus each $\lambda_i$ is a unit of $R$. Since the only unit of $R=(\mathbb Z/2\mathbb Z)[t]$ is $1$, we obtain $b=I$.
\end{proof}

Recall that an \textbf{elementary abelian group} is an abelian group in which all elements other than the identity have the same order.
\begin{lemma}\label{lem:Q-no-elementary-normal}
The group $Q=\Sp_4(k)$ has no nontrivial normal elementary abelian subgroup of exponent two.
\end{lemma}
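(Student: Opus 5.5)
The plan is to use the classical structure of $\Sp_4(\mathbb F_2)$. Its order is $2^4\cdot 3^2\cdot 5=720$, and it is isomorphic to the symmetric group $S_6$. This isomorphism can be realized concretely: $S_6$ acts on the sum-zero subspace of $k^6$ modulo the all-ones vector. That quotient is a $4$-dimensional $k$-space, and the standard dot product induces a nondegenerate alternating form on it. Every permutation preserves the form, which gives an injective homomorphism $S_6\to\Sp_4(k)$. Injectivity holds because a permutation acting trivially would have to fix every vector $e_i+e_j$ modulo the all-ones vector. Since $|S_6|=720=|\Sp_4(k)|$, this homomorphism is an isomorphism.

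Once $Q\cong S_6$, the lemma reduces to group theory. The normal subgroups of $S_6$ are $1$, $A_6$ and $S_6$, because $A_6$ is simple. Neither $A_6$ nor $S_6$ is abelian. So the only abelian normal subgroup of $Q$ is the trivial one, and in particular there is no nontrivial normal elementary abelian $2$-subgroup.

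To keep the proof self-contained, I would like to avoid invoking simplicity of $A_6$ as a black box, and there is a lighter alternative. Let $E\triangleleft Q$ be elementary abelian of exponent $2$. Then $E$ is contained in $O_2(Q)$, the largest normal $2$-subgroup. Since $Q$ acts on $V$, I would examine the fixed space $V^E$. It is nonzero because $E$ is a $2$-group acting on a $2$-group. It is $Q$-invariant because $E$ is normal. But $Q$ acts transitively on the nonzero vectors of $V$, since any nonzero vector extends to a symplectic basis. Hence $V^E$ is a nonzero invariant subspace containing every nonzero vector, so $V^E=V$. Because $Q$ acts faithfully on $V$, this forces $E=1$. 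This argument in fact shows $O_2(Q)=1$ directly, using only the transitivity of $\Sp(V,\omega)$ on $V\setminus\{0\}$ and the standard fact that a $p$-group acting linearly on a nonzero $\mathbb F_p$-space has a nonzero fixed vector.

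I would adopt this second argument as the main proof. Its only nontrivial input is transitivity, which follows from Witt's theorem, or more concretely from extending $v$ to a symplectic basis by the symplectic Gram–Schmidt procedure. I expect no real obstacle here. The only point needing care is the fixed-point fact, which follows by counting: the orbits of a $2$-group on $V$ have sizes that are powers of $2$. Since $|V|=16$ is even and $\{0\}$ is a fixed orbit, the number of fixed points is even and at least $1$, hence at least $2$, so there is a nonzero fixed vector.
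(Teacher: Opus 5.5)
Your adopted argument is essentially the paper's proof: you get a nonzero $E$-fixed vector from parity of $2$-group orbit sizes, note that $V^E$ is $Q$-invariant by normality, conclude $V^E=V$ from transitivity of $Q$ on $V\setminus\{0\}$ (via extension to symplectic bases), and deduce $E=1$ from faithfulness. The $S_6$ route you sketched is also correct but not needed, and the paper likewise avoids it.
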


\begin{proof}
We first show that $Q$ acts transitively on $V\setminus\{0\}$. Let $v,w\in V\setminus\{0\}$. By nondegeneracy of the symplectic form, both $v$ and $w$ can be completed to symplectic bases of $V$. The linear map sending the first symplectic basis to the second preserves the symplectic form and hence belongs to $Q$. In particular, it sends $v$ to $w$. Hence the natural $Q$-module $V$ is simple, and its action is faithful.

Now let $L\triangleleft Q$ be an elementary abelian subgroup of exponent two. Since $L$ is a finite abelian group of exponent two, we have $L\cong(\Z/2\Z)^n$ for some $n\geq 0$. Since $V=k^4=(\Z/2\Z)^4$,
$$
|V\setminus\{0\}|=2^4-1=15.
$$
The group $L$ acts on this set, and every orbit has cardinality
$$
|L\cdot v|=[L:L_v],
$$
which is a power of two since $|L|=2^n$. If there were no fixed nonzero vector, every orbit would have even cardinality, contradicting the fact that their disjoint union has cardinality $15$. Hence there exists $0\ne v\in V$ such that $lv=v$ for any $l\in L$. Equivalently,
$$
V^L:=\{v\in V\mid lv=v\text{ for every }l\in L\}\ne0.
$$

We claim that $V^L$ is $Q$-invariant. Indeed, if $v\in V^L$, $q\in Q$, and $l\in L$, then by normality of $L$, $q^{-1}lq\in L$, and therefore
$$
l(qv)
=
q(q^{-1}lq)v
=
qv.
$$
Thus $qv\in V^L$. Since $V^L$ is a nonzero $Q$-invariant subspace and $V$ is simple, we have
$$
V^L=V.
$$
Consequently, every element of $L$ acts trivially on $V$. The action of $Q$ on $V$ is faithful, so $L=\{I\}$.
\end{proof}

\begin{proposition}\label{prop:characteristic}
The subgroup $D_i$ is \textbf{characteristic} in $\Gamma_i$, i.e., $\alpha(D_i)=D_i$ for any $\alpha\in\operatorname{Aut}(\Gamma_i)$. In the quotient
$$
 \Gamma_i/D_i\cong\SL_3(R)\times Q,
$$
the subgroup $Q$ is characteristic.
\end{proposition}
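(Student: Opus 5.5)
Since $D_i$ is an elementary abelian $2$-group and normal in $\Gamma_i$, the natural plan is to characterize $D_i$ intrinsically as the \emph{largest normal elementary abelian subgroup of exponent two}. Equivalently, we show that every normal elementary abelian $2$-subgroup $N\triangleleft\Gamma_i$ is contained in $D_i$. Granting this, any $\alpha\in\operatorname{Aut}(\Gamma_i)$ maps $D_i$ to a normal elementary abelian subgroup of exponent two. Hence $\alpha(D_i)\subseteq D_i$, and applying the same to $\alpha^{-1}$ gives $\alpha(D_i)=D_i$.

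To prove the containment, take such an $N$ and consider its image $\bar N=ND_i/D_i$ in $\Gamma_i/D_i\cong\SL_3(R)\times Q$. It is a normal abelian subgroup of exponent dividing two. Let $\pi_1,\pi_2$ be the projections to $\SL_3(R)$ and $Q$. The image $\pi_1(\bar N)$ is normal in $\SL_3(R)$ (since $\pi_1$ is surjective) and abelian, so it is trivial by Lemma~\ref{lem:linear-no-abelian-normal}. Thus $\bar N\subseteq\{1\}\times Q$. Then $\bar N$ is a normal elementary abelian $2$-subgroup of $Q$, hence trivial by Lemma~\ref{lem:Q-no-elementary-normal}. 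So $N\subseteq D_i$. The main point to check carefully is that normality passes through the projections, which holds because both projections are surjective homomorphisms.

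For the second claim, $\alpha$ now induces an automorphism $\bar\alpha$ of $\SL_3(R)\times Q$, and we must show $\bar\alpha(\{1\}\times Q)=\{1\}\times Q$. I would characterize $\{1\}\times Q$ as the largest finite normal subgroup. Let $M$ be a finite normal subgroup. Then $\pi_1(M)$ is a finite normal subgroup of $\SL_3(R)$. Every element of it has a finite conjugacy class, so $\pi_1(M)$ is trivial by Lemma~\ref{lem:linear-icc}. Hence $M\subseteq\{1\}\times Q$. Since $\{1\}\times Q$ is itself finite and normal, it is the unique maximal finite normal subgroup, and is therefore preserved by $\bar\alpha$.

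The only potential obstacle is the first step, in correctly handling elements of $N$ that project nontrivially, and the argument above disposes of it. Note that the proof is uniform in $i$ and never uses the twisting in $\theta_2$.
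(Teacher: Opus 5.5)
Your proposal is correct and follows the paper's proof essentially step for step. You identify $D_i$ as the unique largest normal elementary abelian subgroup of exponent two by projecting to $\SL_3(R)$ and $Q$ and invoking Lemmas~\ref{lem:linear-no-abelian-normal} and~\ref{lem:Q-no-elementary-normal}, and you identify $Q$ as the unique largest finite normal subgroup of $\SL_3(R)\times Q$ via the ICC property of Lemma~\ref{lem:linear-icc}, which covers every automorphism of the quotient, not only the induced ones.
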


\begin{proof}
Let $E\triangleleft\Gamma_i$ be elementary abelian of exponent two. Its image in $\SL_3(R)\times Q$ has abelian normal projection to $\SL_3(R)$ and normal elementary abelian projection to $Q$. Both projections are trivial by Lemmas~\ref{lem:linear-no-abelian-normal} and \ref{lem:Q-no-elementary-normal}. Hence $E\subseteq D_i$. Since $D_i$ itself is also elementary abelian of exponent two, it is the unique largest such normal subgroup and is therefore preserved by every $\alpha\in\operatorname{Aut}(\Gamma_i)$. Hence $D_i$ is characteristic in $\Gamma_i$.

Lemma~\ref{lem:linear-icc} implies that $\SL_3(R)$ has no nontrivial finite normal subgroup. Every finite normal subgroup of $\SL_3(R)\times Q$ therefore projects trivially to $\SL_3(R)$ and is contained in $Q$. Thus $Q$ is the unique largest finite normal subgroup of the quotient and is characteristic.
\end{proof}

Recall that, for a $Q$-module $M$, a map $c:Q\to M$, $q\mapsto c_q$, is a \textbf{$1$-cocycle} if $c_{pq}=p\cdot c_q+c_p$ for all $p,q\in Q$. It is a \textbf{$1$-coboundary} if there exists $m\in M$ such that $c_q=q\cdot m-m$ for every $q\in Q$. Recall from Lemma~\ref{lem:finite-cocycle} that, for $v=(a_1,b_1,a_2,b_2)\in V=k^4$ and $r_0(v)=a_1b_1+a_2b_2$, the map $q\mapsto\ell_q:=q\cdot r_0-r_0$ is a $1$-cocycle with values in $V^*$.
\begin{lemma}\label{lem:ell-nonprincipal}
The cocycle $\ell_q=q\cdot r_0-r_0$ is not a coboundary.
\end{lemma}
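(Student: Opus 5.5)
Argue by contradiction. Suppose there is $m\in V^*$ with $\ell_q=q\cdot m-m$ for all $q\in Q$. Then the function $r_1=r_0-m:V\to k$ satisfies
$$
q\cdot r_1-r_1=(q\cdot r_0-r_0)-(q\cdot m-m)=\ell_q-\ell_q=0,
$$
so $r_1$ is a $Q$-invariant function on $V$. Its polarization is still $B_{r_1}=B_{r_0}-B_m=\omega$, since $m$ is linear. Thus $r_1$ is a $Q$-invariant quadratic form on $V$ whose polar form is $\omega$. The plan is to show that no such form exists.

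To rule it out, invoke the transitivity of $Q$ on $V\setminus\{0\}$, proved in Lemma~\ref{lem:Q-no-elementary-normal}. Since $r_1$ is $Q$-invariant, it is constant on $V\setminus\{0\}$, say with value $c\in k$. Take $v=e_1$ and $w=f_1$. Then
$$
\omega(e_1,f_1)=r_1(e_1+f_1)-r_1(e_1)-r_1(f_1)=c-c-c=c
$$
(in characteristic two), and $\omega(e_1,f_1)=1$ forces $c=1$. Now take $v=e_1$ and $w=e_2$, which satisfy $\omega(e_1,e_2)=0$. The same computation gives $0=\omega(e_1,e_2)=c-c-c=c=1$, a contradiction.

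Equivalently, one can count zeros: a quadratic form on $k^4$ whose polar form is nondegenerate has either $6$ or $10$ nonzero zeros, never $0$ or $15$. The two-pair computation above avoids that classification. Hence no such $m$ exists, and $\ell$ is not a coboundary.

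The only real point is the reduction from a coboundary to an invariant quadratic refinement. After that, transitivity does all the work, so there is no genuine obstacle. One should just check the sign conventions: the left action $(q\cdot f)(v)=f(q^{-1}v)$ is used consistently, and in characteristic two subtraction and addition coincide.
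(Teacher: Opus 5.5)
Your argument is correct and is essentially the paper's proof. Both reduce a coboundary to a $Q$-invariant function $r_0-m$ with polarization $\omega$, use transitivity of $Q$ on $V\setminus\{0\}$ to make it constant there, and reach a contradiction from a pair like $e_1,e_2$ with $\omega=0$. The only difference is that the paper excludes the constant $0$ by noting $\omega\neq 0$, whereas you use the pair $e_1,f_1$ to force $c=1$.

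One harmless slip in your unused aside: a quadratic form on $k^4$ with nondegenerate polarization has $6$ or $10$ zeros counting the origin, i.e.\ $5$ or $9$ nonzero zeros, not $6$ or $10$ nonzero ones.
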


\begin{proof}
Suppose $\ell_q=q\cdot\lambda-\lambda$ for some $\lambda\in V^*$ and every $q\in Q$. Then $q\cdot\lambda-\lambda=q\cdot r_0-r_0$, i.e., $q\cdot(r_0-\lambda)=r_0-\lambda$. Hence $r_0-\lambda$ is $Q$-invariant and vanishes at zero. Transitivity of the $Q$-action on $V\setminus\{0\}$ implies that $r_0-\lambda=0$ or $\mathbf 1_{V\setminus\{0\}}$. Recall that the polarization of a function $r:V\to k$ satisfying $r(0)=0$ is
$$
B_r(v,w)=r(v+w)-r(v)-r(w).
$$
It measures the failure of $r$ to be additive. Since $\lambda$ is linear, $B_\lambda=0$, and hence
$$
B_{r_0-\lambda}=B_{r_0}-B_\lambda=\omega.
$$

The zero function has zero polarization, so $r_0-\lambda$ cannot be identically zero. For the second possibility, suppose that $r_0-\lambda=\mathbf 1_{V\setminus\{0\}}$. Choose distinct nonzero vectors $u,w\in V$ such that $\omega(u,w)=0$; for example, one may take $u=e_1$ and $w=e_2$ in a symplectic basis $(e_1,f_1,e_2,f_2)$. Since $u\ne w$ and $\operatorname{char}(k)=2$, we have $u+w\ne0$. Therefore
$$
\begin{aligned}
B_{\mathbf 1_{V\setminus\{0\}}}(u,w)
&=
\mathbf 1_{V\setminus\{0\}}(u+w)
-\mathbf 1_{V\setminus\{0\}}(u)
-\mathbf 1_{V\setminus\{0\}}(w)\\
&=1-1-1=1.
\end{aligned}
$$
This contradicts
$$
B_{r_0-\lambda}(u,w)=\omega(u,w)=0.
$$
Thus neither possibility can occur.
\end{proof}
Recall that a $Q$-module $M$ is \textbf{semisimple} if and only if every $Q$-submodule $N\subseteq M$ admits a $Q$-invariant complement $L$ with $M\cong N\oplus L$ as $Q$-modules; equivalently, every short exact sequence
$$0\longrightarrow N\longrightarrow M\longrightarrow M/N\longrightarrow 0$$ \textbf{splits} $Q$-equivariantly, i.e., there exists a
$Q$-module homomorphism $s:M/N\to M$ such that $\pi_{M/N}\circ s=\mathrm{id}_{M/N}$.

Define the $Q$-module
\begin{equation}\label{eq:Eell}
 E_\ell=V^*\oplus k,
 \qquad q\cdot(\varphi,s)=(q\varphi+s\ell_q,s).
\end{equation}
It fits into
\begin{equation}\label{eq:Eell-extension}
 0\longrightarrow V^*\longrightarrow E_\ell
 \longrightarrow k\longrightarrow0.
\end{equation}
Any $k$-linear splitting $\sigma:k\to E_\ell$ has the form $\sigma(s)=(s\lambda,s)$ for some $\lambda\in V^*$. Since $Q$ acts trivially on $k$, the splitting is $Q$-equivariant if and only if
$$
(q\cdot\lambda+\ell_q,1)
=q\cdot\sigma(1)
=\sigma(1)
=(\lambda,1)
$$
for every $q\in Q$. This is equivalent to $\ell_q=q\cdot\lambda-\lambda$. Thus the extension \eqref{eq:Eell-extension} splits if and only if $\ell$ is a coboundary. Lemma~\ref{lem:ell-nonprincipal} therefore shows that the extension is nonsplit. In particular, $E_\ell$ is not semisimple as a $Q$-module.

\begin{proposition}\label{prop:nonisomorphic}
The groups $\Gamma_1$ and $\Gamma_2$ are not isomorphic.
\end{proposition}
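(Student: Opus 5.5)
Suppose $\Phi:\Gamma_1\to\Gamma_2$ is an isomorphism; we will derive a contradiction using the module structures on $D$. The argument shows that $\Phi$ would intertwine the two $Q$-module structures on $D$ (up to an automorphism of $Q$ and a twist by $\SL_3(R)$), and that a semisimplicity invariant separates them.

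\textbf{Step 1: reduce to an isomorphism of modules.} Apply the proof of Proposition~\ref{prop:characteristic}: $D_i$ is the unique largest normal elementary abelian $2$-subgroup of $\Gamma_i$. That characterization is intrinsic, so $\Phi(D_1)=D_2$. Hence $\Phi$ induces an isomorphism $\bar\Phi:H\to H$, and the restriction $\phi=\Phi|_{D}:D\to D$ is a group isomorphism, i.e.\ $k$-linear, satisfying
$$\phi(\theta_1(h)d)=\theta_2(\bar\Phi(h))\phi(d).$$
Since $Q$ is the unique largest finite normal subgroup of $H$, we get $\bar\Phi(Q)=Q$. So $\bar\Phi$ restricts to an automorphism $\tau$ of $Q$. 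Consequently $\phi$ is an isomorphism of $Q$-modules from $(D,\theta_1|_Q)$ to the $\tau$-twist of $(D,\theta_2|_Q)$.

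\textbf{Step 2: $(D,\theta_1|_Q)$ is semisimple.} Under $\theta_1$, the group $Q$ acts on $A\otimes V^*\cong (V^*)^{\oplus\infty}$ and trivially on $C$. Now $V^*\cong V$ via $\omega$, and $V$ is simple by the transitivity argument in Lemma~\ref{lem:Q-no-elementary-normal}. So $D$ is a direct sum of simple modules $V^*$ and trivial modules $k$, hence semisimple. Semisimplicity is unchanged by twisting by $\tau$.

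\textbf{Step 3: $(D,\theta_2|_Q)$ is not semisimple.} This is the main point. Pick $a\in A$ with $a\ne0$, and $c\in C$ with $\delta(c)=a$, for instance $c=\Delta(a)$. Consider the subspace
$$M=(a\otimes V^*)\oplus kc.$$
By \eqref{eq:theta2}, with $l=1$, we have $\theta_2(1,q)(0,c)=(a\otimes\ell_q,c)$. So $M$ is $Q$-invariant and isomorphic to $E_\ell$ from \eqref{eq:Eell}. Since a submodule of a semisimple module is semisimple, and $E_\ell$ is not semisimple by Lemma~\ref{lem:ell-nonprincipal}, the module $(D,\theta_2|_Q)$ is not semisimple.

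Combining Steps 1–3, the semisimple module $(D,\theta_1|_Q)$ would be isomorphic to a twist of the non-semisimple module $(D,\theta_2|_Q)$, which is impossible. Hence $\Gamma_1\not\cong\Gamma_2$.

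The main obstacle I expect is the bookkeeping in Step 1. We must check that $\bar\Phi$ really maps $Q$ to $Q$ exactly, not merely into it; this holds because $Q$ is characterized intrinsically in $H$ on both sides, as in Proposition~\ref{prop:characteristic}. We must also check that conjugation in $\Gamma_i$ restricted to $D_i$ recovers precisely the action $\theta_i$.
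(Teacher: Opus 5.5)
Your proposal is correct and follows the paper's proof essentially step for step. The shared steps are: $D_i$ and $Q$ are characteristic; the restriction of the isomorphism is a $Q$-module isomorphism from $D_1$ onto the twist of $D_2$ by an automorphism of $Q$; $D_1$ is semisimple; and $D_2$ is not semisimple because it contains the nonsplit extension $E_\ell$. The only difference is in Step 3, where the paper uses a $k$-linear section of $\delta$ to exhibit $A\otimes E_\ell$ as a direct summand of $D_2$, while you more economically realize $E_\ell$ as the submodule $(a\otimes V^*)\oplus k\Delta(a)$ and use that submodules of semisimple modules are semisimple.
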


\begin{proof}
As a $Q$-module,
$$
 D_1=(A\otimes V^*)\oplus C,
$$
where $Q$ acts naturally on $V^*$ and trivially on $A$ and $C$. The symplectic form identifies $V$ and $V^*$ equivariantly, so $V^*$ is simple as a $Q$-module. After choosing bases of $A$ and $C$, the module $D_1$ is an algebraic direct sum of copies of the simple modules $V^*$ and $k$. Thus $D_1$ is semisimple as a $Q$-module.

Choose a $k$-linear section $s:A\to C$ of $\delta$. No $\SL_3(R)$-equivariance is needed. Then $C=\ker(\delta)\oplus s(A)$ as a $k$-module. Since $Q$ acts trivially on $\ker(\delta)\subset D_2$, the action $\theta_2$ gives a $Q$-module isomorphism
\begin{equation}\label{eq:D2-decomposition}
 D_2\cong(A\otimes E_\ell)\oplus\ker(\delta).
\end{equation}
Indeed, under $C=\ker(\delta)\oplus s(A)$, the $Q$-action on $(A\otimes V^*)\oplus s(A)\subset (A\otimes V^*)\oplus C=D_2$ is
$$
 q\cdot(b\otimes \varphi ,s(a))=(q(b\otimes \varphi )+\delta(s(a))\otimes\ell_q,s(a))=(b\otimes q \varphi +a\otimes\ell_q,s(a)).
$$
Hence $(A\otimes V^*)\oplus s(A)$ is $Q$-invariant. Moreover, through the map
$$(b\otimes \varphi ,s(a))\in (A\otimes V^*)\oplus s(A) \mapsto b\otimes (\varphi ,0)+a\otimes (0,1)\in A\otimes (V^*\oplus k),$$
$(A\otimes V^*)\oplus s(A)$ is $Q$-equivariantly isomorphic to the tensor product $A\otimes (V^*\oplus k)=A\otimes E_\ell$ of the trivial $Q$-module $A$ and the $Q$-module $E_\ell=(V^*\oplus k)$ from \eqref{eq:Eell}. Hence
$$
D_2\cong (A\otimes V^*)\oplus (s(A)\oplus\ker(\delta) )\cong  (A\otimes E_\ell)\oplus\ker(\delta).
$$

After choosing a basis of $A$, the nonsplit $Q$-module $E_\ell$ occurs as a direct summand of the $Q$-module $D_2$. If $D_2$ were semisimple, then its direct summand $E_\ell$ would be semisimple, contradicting the nonsplitting of \eqref{eq:Eell-extension}. Therefore $D_2$ is not semisimple.

Suppose that $\alpha:\Gamma_1\to\Gamma_2$ is an isomorphism. By Proposition~\ref{prop:characteristic}, $D_i$ is the unique largest normal elementary abelian subgroup of exponent two in $\Gamma_i$. Since this property is preserved under group isomorphisms,
$$
\alpha(D_1)=D_2.
$$
Hence $\alpha$ induces an isomorphism
$$
\overline{\alpha}:\Gamma_1/D_1\longrightarrow\Gamma_2/D_2.
$$
Using the canonical identifications
$$
\Gamma_i/D_i\cong\SL_3(R)\times Q,
$$
we regard $\overline{\alpha}$ as an automorphism of $\SL_3(R)\times Q$. Since $Q$ is characteristic in this quotient, we have $\overline{\alpha}(Q)=Q$. Thus $\beta:=\overline{\alpha}|_Q \in \operatorname{Aut}(Q)$.

Let $T=\alpha|_{D_1}:D_1\to D_2$. Since $D_1$ and $D_2$ are $k$-vector spaces with $k=\Z/2\Z$, the group isomorphism $T$ is automatically $k$-linear. For $q\in Q$, let
$$
\widetilde q=(0;(1,q))\in\Gamma_1=D_1\rtimes(\SL_3(R)\times Q).
$$
Since the image of $\alpha(\widetilde q)$ in the quotient is $(1,\beta(q))$, there exists $c_q\in D_2$ such that
$$
\alpha(\widetilde q)=(c_q;(1,\beta(q))).
$$
For every $d\in D_1$, we therefore have
$$
\begin{aligned}
T\bigl(\theta_1(1,q)d\bigr)
&=\alpha\bigl(\widetilde q\,d\,\widetilde q^{-1}\bigr)\\
&=\alpha(\widetilde q)\,T(d)\,\alpha(\widetilde q)^{-1}\\
&=\theta_2(1,\beta(q))T(d).
\end{aligned}
$$
In the last equality, the factor $c_q\in D_2$ has no effect because $D_2$ is abelian, so conjugation by $c_q$ acts trivially on $D_2$.

Let $\beta^*D_2$ denote the pullback of the $Q$-module $D_2$ along $\beta$, namely the module $D_2$ with the $Q$-action
$$
q\ast d=\theta_2(1,\beta(q))d.
$$
The preceding identity says precisely that
$$
T:D_1\longrightarrow\beta^*D_2
$$
is a $Q$-module isomorphism. Since $\beta$ is an automorphism of $Q$, the modules $D_2$ and $\beta^*D_2$ have exactly the same invariant subspaces and are therefore semisimple simultaneously. It follows that the semisimplicity of $D_1$ would imply the semisimplicity of $D_2$, contradicting the semisimple--nonsemisimple distinction established above. Hence $\Gamma_1\not\cong\Gamma_2$.
\end{proof}

\section{Completion of the proof}

\begin{proof}[Proof of Theorem~\ref{thm:main}]
The explicit semidirect products in \eqref{eq:groups} are countable. Proposition~\ref{prop:Gamma-T} proves property~(T), and Proposition~\ref{prop:Gamma-icc} proves ICC. By Proposition~\ref{prop:nonisomorphic}, $\Gamma_1\not\cong\Gamma_2$, while Proposition~\ref{prop:factor-iso} supplies a $*$-isomorphism between their group von Neumann algebras. All four assertions of the theorem follow.
\end{proof}

\bibliographystyle{alphaurl}
\bibliography{ref2}

@book {Mar91,
    AUTHOR = {Margulis, G. A.},
     TITLE = {Discrete subgroups of semisimple {L}ie groups},
    SERIES = {Ergebnisse der Mathematik und ihrer Grenzgebiete (3) [Results
              in Mathematics and Related Areas (3)]},
    VOLUME = {17},
 PUBLISHER = {Springer-Verlag, Berlin},
      YEAR = {1991},
     PAGES = {x+388},
      ISBN = {3-540-12179-X},
   MRCLASS = {22E40 (20Hxx 22-02 22D40)},
  MRNUMBER = {1090825},
MRREVIEWER = {Gopal\ Prasad},
       DOI = {10.1007/978-3-642-51445-6},
       URL = {https://doi.org/10.1007/978-3-642-51445-6},
}

@article {EJK,
    AUTHOR = {Ershov, Mikhail and Jaikin-Zapirain, Andrei and Kassabov,
              Martin},
     TITLE = {Property {$(T)$} for groups graded by root systems},
   JOURNAL = {Mem. Amer. Math. Soc.},
  FJOURNAL = {Memoirs of the American Mathematical Society},
    VOLUME = {249},
      YEAR = {2017},
    NUMBER = {1186},
     PAGES = {v+135},
      ISSN = {0065-9266,1947-6221},
      ISBN = {978-1-4704-2604-0; 978-1-4704-4139-5},
   MRCLASS = {22D10 (17B22 17B70 20E42)},
  MRNUMBER = {3724373},
MRREVIEWER = {Dave\ Witte\ Morris},
       DOI = {10.1090/memo/1186},
       URL = {https://doi.org/10.1090/memo/1186},
}

@article {IPV10,
    AUTHOR = {Ioana, Adrian and Popa, Sorin and Vaes, Stefaan},
     TITLE = {A class of superrigid group von {N}eumann algebras},
   JOURNAL = {Ann. of Math. (2)},
  FJOURNAL = {Annals of Mathematics. Second Series},
    VOLUME = {178},
      YEAR = {2013},
    NUMBER = {1},
     PAGES = {231--286},
      ISSN = {0003-486X,1939-8980},
   MRCLASS = {46L10 (20F99)},
  MRNUMBER = {3043581},
       DOI = {10.4007/annals.2013.178.1.4},
       URL = {https://doi.org/10.4007/annals.2013.178.1.4},
}

@article {CIOS21,
    AUTHOR = {Chifan, Ionu\c{t} and Ioana, Adrian and Osin, Denis and Sun,
              Bin},
     TITLE = {Wreath-like products of groups and their von {N}eumann
              algebras {I}: {$\rm W^\ast $}-superrigidity},
   JOURNAL = {Ann. of Math. (2)},
  FJOURNAL = {Annals of Mathematics. Second Series},
    VOLUME = {198},
      YEAR = {2023},
    NUMBER = {3},
     PAGES = {1261--1303},
      ISSN = {0003-486X,1939-8980},
   MRCLASS = {46L10 (20E22 20F67 22D25 22D55 46L36)},
  MRNUMBER = {4660139},
MRREVIEWER = {\`E.\ V.\ Kissin},
       DOI = {10.4007/annals.2023.198.3.6},
       URL = {https://doi.org/10.4007/annals.2023.198.3.6},
}

@incollection{Co82,
  author    = {Connes, Alain},
  title     = {Classification des facteurs},
  booktitle = {Operator Algebras and Applications, Part 2 (Kingston, Ont., 1980)},
  series    = {Proceedings of Symposia in Pure Mathematics},
  volume    = {38},
  pages     = {43--109},
  publisher = {American Mathematical Society},
  address   = {Providence, RI},
  year      = {1982},
       URL = {https://bookstore.ams.org/view?ProductCode=PSPUM%2F38.2}
}

@article{BR95,
  author  = {Bates, Teresa and Robertson, Guyan},
  title   = {Positive definite functions and relative property {(T)}
             for subgroups of discrete groups},
  journal = {Bulletin of the Australian Mathematical Society},
  volume  = {52},
  number  = {1},
  pages   = {31--39},
  year    = {1995},
  doi     = {10.1017/S000497270001443X},
       URL = {https://doi.org/10.1017/S000497270001443X}
}

@misc{danus,
      title={Danus: Orchestrating Mathematical Reasoning Agents with Fact-Graph Memory}, 
      author={Jihao Liu and Guoxiong Gao and Zeming Sun and Bin Wu and Shurui Liu and Jiedong Jiang and Haocheng Ju and Leheng Chen and Ronnie Cheng and Xiping Zhang and Bin Dong},
      year={2026},
      eprint={2607.06447},
      archivePrefix={arXiv},
      primaryClass={cs.AI},
       URL = {https://arxiv.org/abs/2607.06447},
}

@book {Mostow73,
    AUTHOR = {Mostow, G. D.},
     TITLE = {Strong rigidity of locally symmetric spaces},
    SERIES = {Annals of Mathematics Studies},
    VOLUME = {No. 78},
 PUBLISHER = {Princeton University Press, Princeton, NJ; University of Tokyo
              Press, Tokyo},
      YEAR = {1973},
     PAGES = {v+195},
   MRCLASS = {22E40 (53C35)},
  MRNUMBER = {385004},
MRREVIEWER = {M.\ S.\ Raghunathan},
       URL = {https://doi.org/10.1515/9781400881833}
}

@misc{OpenAI26,
  author = {{OpenAI}},
  title = {Ten Advances in Mathematics and Theoretical Computer Science},
  year = {2026},
  month = aug,
  note = {Chapter 4: A Counterexample to Connes's Rigidity Conjecture},
       URL = {https://cdn.openai.com/pdf/ten-proofs-oai.pdf},
}

@article {Con80,
    AUTHOR = {Connes, A.},
     TITLE = {A factor of type {${\rm II}\sb{1}$}\ with countable
              fundamental group},
   JOURNAL = {J. Operator Theory},
  FJOURNAL = {Journal of Operator Theory},
    VOLUME = {4},
      YEAR = {1980},
    NUMBER = {1},
     PAGES = {151--153},
      ISSN = {0379-4024},
   MRCLASS = {46L10 (22D35)},
  MRNUMBER = {587372},
MRREVIEWER = {Richard\ I.\ Loebl},
       URL = {https://jot.theta.ro/jot/archive/1980-004-001/1980-004-001-008.pdf}
}

@article {CIOSII,
    AUTHOR = {Chifan, Ionu\c{t} and Ioana, Adrian and Osin, Denis and Sun,
              Bin},
     TITLE = {Wreath-like products of groups and their von {N}eumann
              algebras {II}: outer automorphisms},
   JOURNAL = {Duke Math. J.},
  FJOURNAL = {Duke Mathematical Journal},
    VOLUME = {175},
      YEAR = {2026},
    NUMBER = {2},
     PAGES = {287--359},
      ISSN = {0012-7094,1547-7398},
   MRCLASS = {46L10 (20E22 20F67 22D25 22D55 46L36)},
  MRNUMBER = {5030466},
       DOI = {10.1215/00127094-2025-0028},
       URL = {https://doi.org/10.1215/00127094-2025-0028},
}

@article {CIOSIII,
    AUTHOR = {Chifan, Ionu\c{t} and Ioana, Adrian and Osin, Denis and Sun,
              Bin},
     TITLE = {Wreath-like products of groups and their von {N}eumann
              algebras {III}: embeddings},
   JOURNAL = {Comm. Math. Phys.},
  FJOURNAL = {Communications in Mathematical Physics},
    VOLUME = {407},
      YEAR = {2026},
    NUMBER = {2},
     PAGES = {Paper No. 38, 24},
      ISSN = {0010-3616,1432-0916},
   MRCLASS = {46L36 (20E22 20E45 46L10)},
  MRNUMBER = {5017847},
       DOI = {10.1007/s00220-025-05508-x},
       URL = {https://doi.org/10.1007/s00220-025-05508-x},
}

@article {CFOT26,
    AUTHOR = {Chifan, Ionu\c{t} and Fern\'andez Quero, Adriana and Osin,
              Denis and Tan, Hui},
     TITLE = {{$\rm W^*$}-superrigidity for property ({T}) groups with
              infinite center},
   JOURNAL = {Adv. Math.},
  FJOURNAL = {Advances in Mathematics},
    VOLUME = {496},
      YEAR = {2026},
     PAGES = {Paper No. 110979, 39},
      ISSN = {0001-8708,1090-2082},
   MRCLASS = {46L10 (20E22 22D25 46L05 46L35)},
  MRNUMBER = {5064416},
       DOI = {10.1016/j.aim.2026.110979},
       URL = {https://doi.org/10.1016/j.aim.2026.110979},
}

@article {DV25a,
    AUTHOR = {Donvil, Milan and Vaes, Stefaan},
     TITLE = {{${\rm W}^*$}-superrigidity for cocycle twisted group von
              {N}eumann algebras},
   JOURNAL = {Invent. Math.},
  FJOURNAL = {Inventiones Mathematicae},
    VOLUME = {240},
      YEAR = {2025},
    NUMBER = {1},
     PAGES = {193--260},
      ISSN = {0020-9910,1432-1297},
   MRCLASS = {46L10 (20E22 20E34 20F65)},
  MRNUMBER = {4871958},
MRREVIEWER = {Paul\ Jolissaint},
       DOI = {10.1007/s00222-025-01320-5},
       URL = {https://doi.org/10.1007/s00222-025-01320-5},
}

@article {DV25b,
    AUTHOR = {Donvil, Milan and Vaes, Stefaan},
     TITLE = {{${\rm W}^*$}-superrigidity for groups with infinite center},
   JOURNAL = {Adv. Math.},
  FJOURNAL = {Advances in Mathematics},
    VOLUME = {480},
      YEAR = {2025},
     PAGES = {Paper No. 110527, 63},
      ISSN = {0001-8708,1090-2082},
   MRCLASS = {46L36 (46L10)},
  MRNUMBER = {4957547},
MRREVIEWER = {David\ Gao},
       DOI = {10.1016/j.aim.2025.110527},
       URL = {https://doi.org/10.1016/j.aim.2025.110527},
}

@article {Io11,
    AUTHOR = {Ioana, Adrian},
     TITLE = {{$W^*$}-superrigidity for {B}ernoulli actions of property
              ({T}) groups},
   JOURNAL = {J. Amer. Math. Soc.},
  FJOURNAL = {Journal of the American Mathematical Society},
    VOLUME = {24},
      YEAR = {2011},
    NUMBER = {4},
     PAGES = {1175--1226},
      ISSN = {0894-0347,1088-6834},
   MRCLASS = {46L36 (28D15 37A20)},
  MRNUMBER = {2813341},
MRREVIEWER = {Stefaan\ Vaes},
       DOI = {10.1090/S0894-0347-2011-00706-6},
       URL = {https://doi.org/10.1090/S0894-0347-2011-00706-6},
}

@article {CH89,
    AUTHOR = {Cowling, Michael and Haagerup, Uffe},
     TITLE = {Completely bounded multipliers of the {F}ourier algebra of a
              simple {L}ie group of real rank one},
   JOURNAL = {Invent. Math.},
  FJOURNAL = {Inventiones Mathematicae},
    VOLUME = {96},
      YEAR = {1989},
    NUMBER = {3},
     PAGES = {507--549},
      ISSN = {0020-9910,1432-1297},
   MRCLASS = {22E27 (22E40 43A22 43A80 46J99)},
  MRNUMBER = {996553},
MRREVIEWER = {Christopher\ Meaney},
       DOI = {10.1007/BF01393695},
       URL = {https://doi.org/10.1007/BF01393695},
}

@article {Po06a,
    AUTHOR = {Popa, Sorin},
     TITLE = {Strong rigidity of {$\rm II_1$} factors arising from malleable
              actions of {$w$}-rigid groups. {I}},
   JOURNAL = {Invent. Math.},
  FJOURNAL = {Inventiones Mathematicae},
    VOLUME = {165},
      YEAR = {2006},
    NUMBER = {2},
     PAGES = {369--408},
      ISSN = {0020-9910,1432-1297},
   MRCLASS = {46L10 (22D25 37A20 46L55)},
  MRNUMBER = {2231961},
MRREVIEWER = {Alain\ Valette},
       DOI = {10.1007/s00222-006-0501-4},
       URL = {https://doi.org/10.1007/s00222-006-0501-4},
}

@article {Po06b,
    AUTHOR = {Popa, Sorin},
     TITLE = {Strong rigidity of {$\rm II_1$} factors arising from malleable
              actions of {$w$}-rigid groups. {II}},
   JOURNAL = {Invent. Math.},
  FJOURNAL = {Inventiones Mathematicae},
    VOLUME = {165},
      YEAR = {2006},
    NUMBER = {2},
     PAGES = {409--451},
      ISSN = {0020-9910,1432-1297},
   MRCLASS = {46L55 (22D25 37A20 37A55 46L10)},
  MRNUMBER = {2231962},
MRREVIEWER = {Alain\ Valette},
       DOI = {10.1007/s00222-006-0502-3},
       URL = {https://doi.org/10.1007/s00222-006-0502-3},
}

@misc{CFT24,
      title={Rigidity results for group von Neumann algebras with diffuse center}, 
      author={Chifan, Ionu\c{t} and Fern\'andez Quero, Adriana and Tan, Hui},
      year={2024},
      eprint={2403.01280},
      archivePrefix={arXiv},
      primaryClass={math.OA},
       URL = {https://arxiv.org/abs/2403.01280},
}

@incollection {HouICM,
    AUTHOR = {Houdayer, Cyril},
     TITLE = {Noncommutative ergodic theory of higher rank lattices},
 BOOKTITLE = {I{CM}---{I}nternational {C}ongress of {M}athematicians. {V}ol.
              4. {S}ections 5--8},
     PAGES = {3202--3223},
 PUBLISHER = {EMS Press, Berlin},
      YEAR = {[2023] \copyright 2023},
      ISBN = {978-3-98547-062-4; 978-3-98547-562-9; 978-3-98547-058-7},
   MRCLASS = {22D25 (22E40 37B05 46L10 46L55)},
  MRNUMBER = {4680358},
MRREVIEWER = {Judith\ A.\ Packer},
       DOI = {10.4171/ICM2022/39},
       URL = {https://doi.org/10.4171/ICM2022/39},
}

@book {BHV08,
    AUTHOR = {Bekka, Bachir and de la Harpe, Pierre and Valette, Alain},
     TITLE = {Kazhdan's property ({T})},
    SERIES = {New Mathematical Monographs},
    VOLUME = {11},
 PUBLISHER = {Cambridge University Press, Cambridge},
      YEAR = {2008},
     PAGES = {xiv+472},
      ISBN = {978-0-521-88720-5},
   MRCLASS = {22-02 (22E40 28D15 37A15 43A07 43A35)},
  MRNUMBER = {2415834},
MRREVIEWER = {Markus\ Neuhauser},
       DOI = {10.1017/CBO9780511542749},
       URL = {https://doi.org/10.1017/CBO9780511542749},
}

@article{EJ,
  author       = {Ershov, Mikhail and Jaikin-Zapirain, Andrei},
  title        = {Property ({T}) for noncommutative universal lattices},
  journal      = {Invent. Math.},
  fjournal     = {Inventiones Mathematicae},
  volume       = {179},
  year         = {2010},
  number       = {2},
  pages        = {303--347},
  issn         = {0020-9910,1432-1297},
  mrclass      = {22D10 (20E08 20E18 20F69)},
  mrnumber     = {2570119},
  mrreviewer   = {Markus Neuhauser},
  doi          = {10.1007/s00222-009-0218-2},
  url          = {https://doi.org/10.1007/s00222-009-0218-2},
}

@article {Sus77,
    AUTHOR = {Suslin, A. A.},
     TITLE = {The structure of the special linear group over rings of
              polynomials},
   JOURNAL = {Izv. Akad. Nauk SSSR Ser. Mat.},
  FJOURNAL = {Izvestiya Akademii Nauk SSSR. Seriya Matematicheskaya},
    VOLUME = {41},
      YEAR = {1977},
    NUMBER = {2},
     PAGES = {235--252, 477},
      ISSN = {0373-2436},
   MRCLASS = {13C10 (14F05)},
  MRNUMBER = {472792},
MRREVIEWER = {Chr.\ U.\ Jensen},
}

@article {Sha99,
    AUTHOR = {Shalom, Yehuda},
     TITLE = {Bounded generation and {K}azhdan's property ({T})},
   JOURNAL = {Inst. Hautes \'Etudes Sci. Publ. Math.},
  FJOURNAL = {Institut des Hautes \'Etudes Scientifiques. Publications
              Math\'ematiques},
    NUMBER = {90},
      YEAR = {1999},
     PAGES = {145--168},
      ISSN = {0073-8301,1618-1913},
   MRCLASS = {22E46 (22E67)},
  MRNUMBER = {1813225},
MRREVIEWER = {Paul\ Jolissaint},
       URL = {http://www.numdam.org.proxy.rubens.ens.fr/item?id=PMIHES_1999__90__145_0},
}

@incollection {Sha06,
    AUTHOR = {Shalom, Yehuda},
     TITLE = {The algebraization of {K}azhdan's property ({T})},
 BOOKTITLE = {International {C}ongress of {M}athematicians. {V}ol. {II}},
     PAGES = {1283--1310},
 PUBLISHER = {Eur. Math. Soc., Z\"urich},
      YEAR = {2006},
      ISBN = {978-3-03719-022-7},
   MRCLASS = {22D10 (16P60 19B10 20D06 20G05)},
  MRNUMBER = {2275645},
MRREVIEWER = {Alain\ Valette},
}

\end{document}